 \let\mathscr\relax
\newtheorem{theorem}{Theorem}[section]
\newtheorem{proposition}[theorem]{Proposition}
\newtheorem{lemma}[theorem]{Lemma}
\newtheorem{corollary}[theorem]{Corollary}
\theoremstyle{definition}
\newtheorem{definition}[theorem]{Definition}
\theoremstyle{remark}
\newcommand{\End}{\operatorname{End}}
\newcommand{\id}{\operatorname{id}}
\newcommand{\SL}{\mathcal{SL}}
\newcommand{\cat}{\mathcal}
\renewcommand{\phi}{\varphi}
\newcommand{\la}{\left\langle}
\newcommand{\ra}{\right\rangle}
\newcommand{\under}{\backslash}
\newcommand{\ov}{\overline}
\newcommand{\tensor}{\otimes}
\newcommand{\mc}[1][Q]{{}_{#1}{\mathcal{M}}}
\newcommand{\MC}[1][Q]{{\mathcal{M}}_{#1}}
\NewDocumentCommand{\mm}{ O{M} O{Q} m }{{}_{#2}{#1}_{#3}}
\newcommand{\QM}{\mm{}}
\newcommand{\QMR}{\mm{R}}
\def\amslatex\slash{{\protect\AmS-\protect\LaTeX}}
\begin{document} 

\title{Morita Equivalence for Quantales}

\author{\renewcommand{\thefootnote}{\arabic{footnote}}
\rm Moacyr Rodrigues\footnotemark[1] \and
\renewcommand{\thefootnote}{\arabic{footnote}}
\rm Ciro Russo\footnotemark[2]}

\maketitle
\footnotetext[1]{Departamento de Matem\'atica, Universidade Federal da Bahia -- Salvador, Bahia, Brazil. \\ {\tt moacyrjunior@ufba.br}}
\footnotetext[2]{Departamento de Matem\'atica, Universidade Federal da Bahia -- Salvador, Bahia, Brazil. \\ {\tt ciro.russo@ufba.br}}

\maketitle
\date{}

\begin{abstract}
We study Morita equivalence in the context of quantales with identity, in the wake of Katsov and Nam's analogous work on semirings. Among a number of other results, we prove a characterization of Morita equivalence and an Eilenberg-Watts-type Theorem for quantales.
\end{abstract}

\section{Introduction}

Quantales were introduced by Mulvey \cite{mul} in 1986. They boast nowadays a quite extensive literature and they proved to be of interest for various areas of mathematics, such as, for example, logic \cite{abrvic,yetter,galtsi,rusapal,ruslu}, non-commutative topology \cite{borvdb,conmir}, and fuzzy topology \cite{zhang2022}. There is a certain similarity between quantales and rings (or, even better, semirings) under various aspects, and indeed many classical properties and constructions of rings and semirings have a quantale-theoretic version. Moreover, the study of quantales is very often connected to the one of quantale modules, the latter constituting essentially the representation theory of quantales.

What is nowadays called Morita equivalence was introduced in 1958 by Kiiti Morita \cite{morita} for rings, and eventually extended in various directions (see, for example, \cite{moreq2,moreq3,moreq4,katnam}). The literature on quantale modules is relatively recent, but already quite well-established \cite{mesa2025,rusapal,russajl,rusjlc,pas,sol,rose94,nief96,pas99}, so the need for a quantale-theoretic Morita equivalence is pretty natural.

Aim of this paper is to study Morita equivalence in the very general context of quantales with identity, with no futher assumptions, in the wake of Katsov and Nam's work on semirings \cite{katnam}. As we shall see, most of the results that hold for semirings can be proved for quantales, possibly up to a reformulation, including a suitable version of Eilenberg-Watts Theorem \cite{eil60,wat60}. 

The paper is organized as follows. In Section \ref{prel}, we shall briefly recall definitions and results on quantales and their modules.

Section \ref{tensor} is dedicated to tensor products of quantale modules. Besides recalling the definition and their existence theorem, we will prove various results that will be useful in the subsequent sections.

Projective generators of the categories of quantale modules are introduced and characterized in Section \ref{progenerator}. Such modules are directly involved in the description of Morita equivalent quantales.

Last, in Section \ref{morita}, we shall investigate the properties of Morita equivalence of quantales and prove a quantale-theoretic version of Eilenberg-Watts Theorem (Theorem \ref{adj}) and a characterization of Morita equivalence by means of progenerators (Theorem \ref{moritath}).

\section{Preliminaries}
\label{prel}

In this section we shall briefly recall definitions and results on the ordered algebraic structures directly involved in our main results. For any further information on the topics, we refer the reader to \cite{krupas,mul,rosenthal} for what concerns quantales, and to \cite{pas,mulnaw,rusthesis,rusjlc,rusapal,russajl,sol} for quantale modules.


The category $\SL$ of \emph{sup-lattices} has complete lattices as objects and maps preserving arbitrary joins -- or, which amounts to the same when the orders are complete, residuated maps -- as morphisms. The bottom element of a sup-lattice shall be denoted by $\bot$ and the top element by $\top$. We recall that any sup-lattice morphism obviously preserve the bottom element while it does not need to preserve the top. 

Quantales are often defined as sup-lattices in the category of semigroups. Since we shall only deal with unital quantales, we can say that $(Q, \bigvee, \cdot, 1)$ is a \emph{quantale} if $(Q,\bigvee)$ is a sup-lattice, $(Q, \cdot, 1)$ is a monoid, and the product is biresiduated, i.~e., for all $a,b \in Q$,
$$\exists b\under a = \max\{c \in Q \mid bc \leq a\} \text{ and } \exists a/b = \max\{c \in Q \mid cb \leq a\}.$$
The above condition is equivalent to the distributivity of $\cdot$ w.r.t. any join:
$$\forall a \in Q \ \forall B \subseteq Q \ \left(a \cdot \bigvee B = \bigvee\limits_{b \in B} (a \cdot b) \text{ and } \left(\bigvee B\right) \cdot a = \bigvee\limits_{b \in B} (b \cdot a)\right).$$
A quantale is \emph{commutative} if so is the multiplication and \emph{integral} if $1 = \top$.

The morphisms in the category $\cat Q$ of quantales are maps that are simultaneously sup-lattice and monoid homomorphisms or, that is the same, residuated monoid homomorphisms. 

The ring-like countenance of quantales obviously suggests a natural definition of module. Given a quantale $Q$, a \emph{left module over $Q$} (or, simply, {\em left $Q$-module}) is a sup-lattice $(M, \bigvee)$ acted on by $Q$ via a \emph{scalar multiplication} $\cdot: (a,u) \in Q \times M \mapsto a \cdot u \in M$ such that  
\begin{itemize}
\item $(ab) \cdot u = a \cdot (b \cdot u)$, for all $a, b \in Q$ and $u \in M$;
\item the scalar multiplication distributes over arbitrary joins in both arguments or, equivalently, is biresiduated;
\item $1 \cdot u = u$, for all $u \in M$.\footnote{Using a different symbol for this action would make the notations much heavier without helping the reading, so we rather preferred to use the same symbol of the product in the quantale, relying on the context and different sets of letters for scalars and ``vectors'' for the meaning of each of its occurrences. Whenever convenient, we shall also drop it.}
\end{itemize}
Right modules are defined analogously, mutatis mutandis. Moreover, if $R$ is another quantale, a sup-lattice $M$ is a $Q$-$R$-bimodule if it is a left $Q$-module, a right $R$-module, and in addition $(a \cdot_Q u) \cdot_R a' = a \cdot_Q (u \cdot_R a')$ for all $a \in Q$, $a' \in R$, and $u \in M$.

We also recall that, as for the quantale product, the biresiduation of $\cdot$ induces two more maps:
\begin{itemize}
\item[] $\under: (a,u) \in Q \times M \mapsto a\under u = \max\{v \in M \mid av \leq u\} \in M$, and
\item[] $/: (u,v) \in M \times M \mapsto u/v = \max\{a \in Q \mid av \leq u\} \in Q.$
\end{itemize}
We shall normally refer to ``modules'' and use the left module notation whenever a definition or a result can be stated both for left and right modules.

Given two $Q$-modules $M$ and $N$, a map $f: M \to N$ is a $Q$-module homomorphism if it is a sup-lattice homomorphism which preserves the scalar multiplication, namely, an action-preserving residuated map. For any quantale $Q$ we shall denote by $\mc$ and $\MC$ respectively the categories of left $Q$-modules and right $Q$-modules with the corresponding homomorphisms. Moreover, if $R$ is another quantale $\mc_R$ shall denote the category whose objects are $Q$-$R$-bimodules and morphisms are maps which are simultaneously left $Q$-module and right $R$-module morphisms.





We recall that a \emph{bifunctor} (short for binary functor) or \emph{functor of two variables} is simply a functor whose domain is the product of two categories. For $\cat C_1$, $\cat C_2$ and $\cat D$ categories, a functor $F: \cat C_1 \times \cat C_2 \rightarrow \cat D$ is a \emph{bifunctor} from $\cat C_1$ and $\cat C_2$ to $\cat D$.

Let $\cat C$ be a category and $\cat J$ be a (small) category. A functor $X:\cat J \rightarrow \cat C$ is also called a (resp.: small) $\cat C$-\emph{diagram} of shape $\cat J$.

\begin{definition}
Let $F:\cat J \rightarrow \cat C$ be a diagram of shape $\cat J$ in a category $\cat C$. A \textit{cone} to $F$ is an object $N$ of $\cat C$ together with a family $\psi_X:N \rightarrow F(X)$ of morphisms indexed by the objects $X$ of $\cat J$, such that for every morphism $f:X \rightarrow Y$ in $\cat J$, we have $F(f)\circ \psi_X = \psi_Y$.

A \textit{limit} of the diagram $F:\cat J \rightarrow \cat C$ is a cone $(L,\phi)$ to $F$ such that for every other cone $(N,\psi)$ to $F$ there exists a unique morphism $u:N \rightarrow L$ such that $\phi_X \circ u = \psi_X$ for all $X$ in $\cat J$.

One says that the cone $(N,\psi)$ factors through the cone $(L,\phi)$ with the unique factorization $u$. The morphism $u$ is sometimes called the \emph{mediating morphism}.

The dual notions of limits and cones are colimits and co-cones. We can obtain their definitions by inverting all morphisms in the above definitions. A \textit{co-cone} of a diagram $F: \cat J \rightarrow \cat C$ is an object $N$ of $\cat C$ together with a family of morphisms$\psi_X:F(X) \rightarrow N$ for every  objects $X$ of $\cat J$, such that for every morphism $f:X \rightarrow Y$ in $\cat J$, we have $\psi_Y \circ F(f) = \psi_X$. A \textit{colimit} of the diagram $F: \cat J \rightarrow \cat C$ is a co-cone $(L,\phi)$ of $F$ such that for any other co-cone $(N,\psi)$ of $F$ there exists a unique morphism $u:L \rightarrow N$ such that $u \circ \phi_X = \psi_X$ for all $X$ in $\cat J$.
\end{definition}

A category $\cat C$ is \emph{complete} if it has all small limits, that is, every small diagram has a limit in $\cat C$. It is \emph{cocomplete} if it has all small colimits, that is, every small diagram has a colimit in $\cat C$. A category $\cat C$ is cocomplete if and only if its opposite category is complete.

\section{Tensor product of quantale modules}
\label{tensor}

In \cite[Theorem 6.3]{ruscorrapal}, it was proved the existence of the tensor product of quantale modules. We recall here the assertion of the theorem for the reader's convenience; for more details on the construction of the tensor product of quantale modules the reader may refer to the cited paper.
\begin{theorem}\label{tensormqexists}
Let $M_1$ be a right $Q$-module and $M_2$ a left $Q$-module. Then the tensor product $M_1 \tensor_Q M_2$ of the $Q$-modules $M_1$ and $M_2$ exists. It is, up to isomorphisms, the quotient $\mathscr{P}(M_1 \times M_2)/\vartheta_R$ of the free sup-lattice generated by $M_1 \times M_2$ with respect to the (sup-lattice) congruence relation generated by the set
\begin{equation}\label{R}
\rho = \left\{
	\begin{array}{l}
		\left(\left\{\left(\bigvee X, y\right)\right\}, \bigcup_{x \in X}\{(x,y)\}\right) \\
		\left(\left\{\left(x, \bigvee Y\right)\right\}, \bigcup_{y \in Y}\{(x,y)\}\right) \\
		\left(\{(x \cdot_1 a, y)\}, \{(x,a \cdot_2 y)\}\right) \\
	\end{array} \right\vert
	\left.
	\begin{array}{l}
	X \subseteq M_1, y \in M_2 \\
	Y \subseteq M_2, x \in M_1 \\
	a \in Q \\
	\end{array}
	\right\}.
\end{equation}
\end{theorem}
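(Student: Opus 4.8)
The plan is to verify directly that the proposed quotient enjoys the universal property characterising the tensor product in $\SL$; since that property determines an object up to a unique isomorphism, this establishes both existence and the stated description in one stroke. Recall that the tensor product of a right $Q$-module $M_1$ and a left $Q$-module $M_2$ is a sup-lattice $T$ together with a map $\beta : M_1 \times M_2 \to T$ that preserves arbitrary joins in each variable separately and is $Q$-balanced, i.e.\ $\beta(x \cdot_1 a, y) = \beta(x, a \cdot_2 y)$ for all $x \in M_1$, $y \in M_2$, $a \in Q$, and which is universal among such maps: for every sup-lattice $N$ and every join-preserving, $Q$-balanced $f : M_1 \times M_2 \to N$ there is a unique $\SL$-morphism $g : T \to N$ with $g \circ \beta = f$. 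So the goal is to exhibit such a $\beta$ into the quotient and then check universality.

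First I would recall that $\mathscr{P}(S)$, the powerset of a set $S$ with unions as joins, is the free sup-lattice on $S$, the insertion being $\eta_S : s \mapsto \{s\}$; concretely, every function $h$ from $S$ to the carrier of a sup-lattice $N$ extends uniquely to an $\SL$-morphism $\hat h : \mathscr{P}(S) \to N$ with $\hat h(W) = \bigvee_{s \in W} h(s)$. Taking $S = M_1 \times M_2$, I would let $q : \mathscr{P}(M_1 \times M_2) \to \mathscr{P}(M_1 \times M_2)/\vartheta_R$ be the quotient by the sup-lattice congruence generated by $\rho$ and set $\beta \bydef q \circ \eta_{M_1 \times M_2}$, so that $\beta(x,y)$ is the $\vartheta_R$-class of $\{(x,y)\}$. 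The three families in $\rho$ are tailored precisely so that $\beta$ inherits the required structure: the first two impose join preservation in the first and second argument, and the third imposes $Q$-balancedness; thus $\beta$ is automatically a join-preserving, $Q$-balanced map.

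Next I would establish universality. Given a join-preserving, $Q$-balanced $f : M_1 \times M_2 \to N$, I would view it as a mere function into the carrier of $N$ and take its unique $\SL$-extension $\hat f : \mathscr{P}(M_1 \times M_2) \to N$, $\hat f(W) = \bigvee_{(x,y) \in W} f(x,y)$. The crucial step is to show that $\hat f$ collapses $\vartheta_R$; since $\ker \hat f$ is a sup-lattice congruence and $\vartheta_R$ is the least such congruence containing $\rho$, it suffices to check that each generating pair of $\rho$ lies in $\ker \hat f$, and these three checks are direct transcriptions of the defining properties of $f$ (for instance $\hat f(\{(\bigvee X, y)\}) = f(\bigvee X, y) = \bigvee_{x \in X} f(x,y) = \hat f(\bigcup_{x \in X}\{(x,y)\})$). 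It follows that $\hat f$ factors uniquely as $\hat f = g \circ q$, whence $g \circ \beta = f$; uniqueness of $g$ is then forced because $q$ is surjective and the image of $\eta_{M_1 \times M_2}$ join-generates $\mathscr{P}(M_1 \times M_2)$, so the image of $\beta$ join-generates the quotient and two $\SL$-morphisms agreeing there must coincide.

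The step I expect to be the main obstacle is the infrastructural one, namely ensuring that $\SL$ behaves like a (possibly infinitary) variety for the constructions invoked: that the sup-lattice congruence $\vartheta_R$ generated by $\rho$ exists, that the quotient is again a sup-lattice with joins computed on representatives, and that kernels of $\SL$-morphisms are genuine sup-lattice congruences, i.e.\ closed under arbitrary joins. These are exactly the points secured in the construction cited from \cite[Theorem 6.3]{ruscorrapal}; once they are in hand, the verification of the universal property above is routine, and existence follows since any two objects satisfying it are canonically isomorphic.
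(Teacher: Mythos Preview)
Your argument is correct and is exactly the standard verification of the universal property for this construction. Note, however, that the paper does not actually prove this theorem: it merely recalls the statement from \cite[Theorem~6.3]{ruscorrapal} and refers the reader there for details. Your sketch is essentially what that cited proof must contain---free sup-lattice on $M_1\times M_2$, quotient by the congruence generated by $\rho$, then the routine check that any $Q$-balanced bimorphism factors uniquely through the quotient---so there is nothing to compare beyond observing that you have supplied what the present paper deliberately omits.
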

In \cite{rusapal}, it was proved also that each quantale morphism $h: Q \to R$ induces an adjoint and co-adjoint functor $( \ )_h: \mc[R] \to \mc$, whose left adjoint is $R \tensor_Q \underline{\ \ }$. 

In this section, we shall establish various properties of the tensor product of quantale modules, which will be useful in the next sections.

\begin{proposition}
Let $M$ be a $Q$-$R$-bimodule and let $N$ be an $R$-$S$-bimodule, and let $f \in \mc_R (M_1,M)$ and $g \in \mc[R]_S (N_1,N)$. So the assignments $(M,N) \mapsto M \tensor N$ and $(f,g) \mapsto f \tensor g$ define a bifunctor $- \tensor -: \mc_R \times \mc[R]_S \rightarrow \mc_S$.
Furthermore let $\mm[N][R]{S}$ and $\mm[O][Q]{S}$ be bimodules, then on $\MC[S] (N,O)$ there is a $Q$-$R$-bimodule structure defined by
$$(qgr)(n)=q(g(rn)) \hspace{1cm}\mbox{ for } n \in N, \, g \in \MC[S] (N,O),$$
and if $\QMR$ and $\mm[O][Q]{S}$ are bimodules, then on $\mc (M,O)$ there is the $R$-$S$-bimodule structure defined by
$$(rfs)(m)=f(mr)s \hspace{1cm}\mbox{ for } m \in M, \, f \in \mc (M,O).$$
\end{proposition}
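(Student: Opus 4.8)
The plan is to derive the three assertions from the universal property of the tensor product encoded in Theorem \ref{tensormqexists}. That theorem presents $M \tensor_R N$ as a quotient of the free sup-lattice on $M \times N$, so the canonical map $(m,n) \mapsto m \tensor n$ is join-preserving in each variable and $R$-balanced (i.e.\ $(mr) \tensor n = m \tensor (rn)$), and is universal among such maps: every map $M \times N \to P$ into a sup-lattice that preserves joins in each variable and is $R$-balanced factors uniquely through a sup-lattice homomorphism out of $M \tensor_R N$. Since the elements $m \tensor n$ generate $M \tensor_R N$ under joins, two sup-lattice homomorphisms out of it coincide as soon as they agree on these generators, and I will check every identity that way.

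For the bifunctor I would first equip $M \tensor_R N$ with a $Q$-$S$-bimodule structure. For fixed $q \in Q$ the map $(m,n) \mapsto (qm) \tensor n$ preserves joins in each variable and is $R$-balanced, the balancing being exactly the $Q$-$R$-compatibility $q(mr) = (qm)r$ in $M$; hence it factors through a sup-lattice endomorphism $\lambda_q$ with $\lambda_q(m \tensor n) = (qm) \tensor n$, and symmetrically $\rho_s(m \tensor n) = m \tensor (ns)$ for $s \in S$. Evaluating on generators gives $\lambda_{qq'} = \lambda_q \circ \lambda_{q'}$, $\lambda_1 = \id$, and $\lambda_{\bigvee_i q_i} = \bigvee_i \lambda_{q_i}$ (and the $\rho$-analogues), so these are biresiduated left $Q$- and right $S$-actions, while $\lambda_q \circ \rho_s = \rho_s \circ \lambda_q$ (both sending $m \tensor n$ to $(qm) \tensor (ns)$) gives the bimodule compatibility. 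For morphisms $f \in \mc_R(M_1,M)$ and $g \in \mc[R]_S(N_1,N)$, the map $(m_1,n_1) \mapsto f(m_1) \tensor g(n_1)$ is again join-preserving in each variable and $R$-balanced, since $f(m_1 r) \tensor g(n_1) = (f(m_1)r) \tensor g(n_1) = f(m_1) \tensor (r\,g(n_1)) = f(m_1) \tensor g(rn_1)$ by right $R$-linearity of $f$ and left $R$-linearity of $g$; it therefore factors through $f \tensor g$. That $f \tensor g$ is a $Q$-$S$-bimodule morphism follows on generators from $f$ being left $Q$-linear and $g$ right $S$-linear, and the functoriality identities $\id \tensor \id = \id$ and $(f' \tensor g') \circ (f \tensor g) = (f'f) \tensor (g'g)$ follow from uniqueness of the factorization (equivalently, by checking on generators).

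For the two hom-bimodules I would argue directly. In each case the hom-set is a sup-lattice under the pointwise order with joins computed pointwise, because the relevant action on the codomain preserves arbitrary joins, so a pointwise join of module homomorphisms is again one. For $\MC[S](N,O)$ with $(qgr)(n) = q(g(rn))$, one checks that $qgr$ is a right $S$-morphism from $g$ being one together with the $Q$-$S$-compatibility in $O$ and the $R$-$S$-compatibility in $N$; the left $Q$- and right $R$-module axioms, their mutual compatibility, and distributivity over joins in both the scalar and the function argument all reduce to the corresponding facts in $N$ and $O$ checked pointwise. The case of $\mc(M,O)$ with $(rfs)(m) = f(mr)s$ is analogous: the left $R$-action is defined contravariantly through the right $R$-action on $M$, so that $((rr')f)(m) = f(m(rr')) = f((mr)r') = (r(r'f))(m)$ yields a genuine left action, and one verifies that $rfs$ is left $Q$-linear using the $Q$-$R$-compatibility in $M$ and the $Q$-$S$-compatibility in $O$.

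The only genuinely delicate point is the well-definedness of the actions $\lambda_q$, $\rho_s$ and of $f \tensor g$ on the quotient $M \tensor_R N$, and this is precisely what the universal factorization of Theorem \ref{tensormqexists} supplies once the $R$-balancing conditions are verified; in every instance that verification is a one-line consequence of a bimodule compatibility axiom. Everything else is routine checking on generators in the tensor product and pointwise on the hom-sets.
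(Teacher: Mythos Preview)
Your proposal is correct and follows essentially the same approach as the paper: define $f \tensor g$ by checking that $(m_1,n_1) \mapsto f(m_1) \tensor g(n_1)$ is join-preserving in each variable and $R$-balanced, then invoke the universal property of Theorem \ref{tensormqexists} and verify functoriality on generators. If anything, your write-up is more complete than the paper's, which does not explicitly construct the $Q$-$S$-bimodule structure on $M \tensor_R N$, does not verify that $f \tensor g$ is a $Q$-$S$-bimodule morphism (only a sup-lattice morphism), and dismisses the hom-bimodule assertions as ``obvious''; you spell out each of these points.
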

\begin{proof}
Let $(f,g): (M,N) \rightarrow (M_1,N_1)$, with $M,M_1 \in \MC$, $N,N_1 \in \mc$. We shall define $f\tensor g: M\tensor N \rightarrow M_1 \tensor N_1$ e $h\tensor k: M_1\tensor N_1 \rightarrow M_2 \tensor N_2$, and prove that $- \tensor -$ preserves the composition. 

Let $\varphi: (x,y) \in M\times N \mapsto f(x)\tensor g(y) \in M_1\tensor N_1$; we have that
$$\begin{array}{l}
\varphi(xq,y) = f(xq)\tensor g(y) = f(x)q\tensor g(y) = \\
= f(x)\tensor qg(y) = f(x)\tensor g(qy) = \varphi(x,qy).
\end{array}$$
We also have
$$\begin{array}{l}
\varphi\left(\bigvee X,y\right) = f\left(\bigvee X\right)\tensor g(y) = \left(\bigvee_{x \in X} f(x)\right)\tensor g(y) = \\
= \bigvee_{x \in X} (f(x)\tensor g(y)) = \bigvee_{x \in X} \varphi(x,y),
\end{array}$$
and, similarly, $\varphi \left( x,\bigvee Y\right) = \bigvee_{y \in Y} \varphi(x,y)$. So, $\varphi$ is a $Q$-bimorphism and, therefore, it canonically induces a sup-lattice morphism $f\tensor g: M \tensor N \to M_1 \tensor N_1$. Analogously, we can define sup-lattice morphisms $h \tensor k: M_1\tensor N_1 \rightarrow M_2 \tensor N_2$ and $(h\circ f)\tensor(k\circ g): M \tensor N \to M_2 \tensor N_2$. Furthermore, we have $(h\circ f)\tensor(k\circ g) = (h\tensor k)\circ(f\tensor g)$ because $h(f(x))\tensor k(g(y)) = (h\tensor k)(f(x)\tensor g(y)) = ((h\tensor k)\circ(f\tensor g))(x\tensor y)$.

It follows that $- \tensor -$ is a bifunctor. The second part of the proposition is obvious.
\end{proof}

By routine calculations (see, for example, \cite[Proposition 3.2]{kat7}), the following can be easily proved.
\begin{proposition}
Let $N$, $N_1$ be $Q$-$R$-bimodules, $O$, $O_1$ be $S$-$R$-bimodules, $\beta \in \mc_R (N_1,N)$, and $\gamma \in \mc[S]_R (O,O_1)$. The assignments $(N,O) \mapsto \MC[R] (N,O)$ and
$$(\beta,\gamma) \mapsto \mc[S]_Q(\beta,\gamma): f \in \MC[R] (N,O) \mapsto \gamma \circ f \circ \beta \in \MC[R] (N_1,O_1)$$
define a bifunctor $\MC[R]: ({\mc_R})^{op} \times \mc[S]_R \rightarrow \mc[S]_Q$.

Analogously, for $M$, $M_1$ $S$-$Q$-bimodules, $O$, $O_1$ $S$-$R$-bimodules, the assignments $(M,O) \mapsto \mc[S] (M,O)$ and
$$(\alpha,\gamma) \mapsto \mc[Q]_R(\alpha,\gamma): f \in \mc[S] (M,O) \rightarrow \gamma \circ f \circ \alpha \in \mc[S] (M_1,O_1),$$
where $\alpha \in \mc[S]_Q (M_1,M)$ and $\gamma \in \mc[S]_R (O,O_1)$, define a bifunctor $\mc[S]: ({\mc[S]_Q})^{op} \times \mc[S]_R \rightarrow \mc_R$.
\end{proposition}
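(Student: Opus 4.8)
The plan is to split the verification into the action of $\MC[R]$ on objects, its action on morphisms, and the two functor axioms, and then to deduce the second assertion from the first by symmetry. On objects there is nothing to prove beyond invoking the previous proposition: for $N$ a $Q$-$R$-bimodule and $O$ an $S$-$R$-bimodule, the set $\MC[R](N,O)$ of right $R$-module morphisms is, after the evident (cyclic) relabelling of the acting quantales, exactly a hom-set of the shape already treated there, so it carries the $S$-$Q$-bimodule structure
$$(s\cdot f\cdot q)(n)=s\cdot\bigl(f(q\cdot n)\bigr)\qquad(s\in S,\ q\in Q,\ n\in N),$$
where $s$ acts through the left $S$-action of $O$ and $q$ through the left $Q$-action of $N$. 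Thus $\MC[R](N,O)$ is an object of $\mc[S]_Q$.

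For the morphism part, fix $\beta\in\mc_R(N_1,N)$ and $\gamma\in\mc[S]_R(O,O_1)$. First I would observe that $\gamma\circ f\circ\beta$ is again a right $R$-module morphism whenever $f$ is, since $\beta$, $f$, and $\gamma$ are residuated right $R$-linear maps and these are closed under composition; hence $\MC[R](\beta,\gamma)\colon f\mapsto\gamma\circ f\circ\beta$ indeed maps $\MC[R](N,O)$ into $\MC[R](N_1,O_1)$. Next I would check that $\MC[R](\beta,\gamma)$ is a morphism of $\mc[S]_Q$. Preservation of joins is immediate, because joins of families of homomorphisms are computed pointwise, precomposition by $\beta$ is likewise evaluated pointwise, and $\gamma$ preserves joins, whence
$$\gamma\circ\left(\bigvee_i f_i\right)\circ\beta=\bigvee_i\left(\gamma\circ f_i\circ\beta\right).$$
For the $S$-$Q$-linearity, evaluating $\MC[R](\beta,\gamma)(s\cdot f\cdot q)$ at $n_1\in N_1$ and using that $\beta$ preserves the left $Q$-action while $\gamma$ preserves the left $S$-action yields
$$\gamma\bigl(s\cdot f(q\cdot\beta(n_1))\bigr)=\gamma\bigl(s\cdot f(\beta(q\cdot n_1))\bigr)=s\cdot(\gamma\circ f\circ\beta)(q\cdot n_1),$$
which is exactly $\bigl(s\cdot\MC[R](\beta,\gamma)(f)\cdot q\bigr)(n_1)$; so $\MC[R](\beta,\gamma)$ lies in $\mc[S]_Q\bigl(\MC[R](N,O),\MC[R](N_1,O_1)\bigr)$.

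Finally I would verify the two functor axioms, keeping the contravariance of the first variable in view. Identities are trivial, as $\id_O\circ f\circ\id_N=f$. For composites, a pair of composable arrows of $(\mc_R)^{op}\times\mc[S]_R$ is recorded by $\beta_1\in\mc_R(N_1,N)$, $\beta_2\in\mc_R(N_2,N_1)$ and $\gamma_1\in\mc[S]_R(O,O_1)$, $\gamma_2\in\mc[S]_R(O_1,O_2)$, whose composite has first component $\beta_1\circ\beta_2$ (the order reversed by the $op$) and second component $\gamma_2\circ\gamma_1$; then
$$\MC[R](\beta_2,\gamma_2)\circ\MC[R](\beta_1,\gamma_1)\colon f\longmapsto(\gamma_2\circ\gamma_1)\circ f\circ(\beta_1\circ\beta_2),$$
which coincides with $\MC[R](\beta_1\circ\beta_2,\gamma_2\circ\gamma_1)$, as required. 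The second assertion is obtained by the same three steps \emph{mutatis mutandis}: the object part is the other hom-bimodule structure of the previous proposition, well-definedness of $f\mapsto\gamma\circ f\circ\alpha$ uses that $\alpha$ and $\gamma$ are left $S$-linear, and the bimodule-morphism property now rests on $\alpha$ preserving the right $Q$-action and $\gamma$ the right $R$-action. Everything here is formal bookkeeping; the only step that genuinely demands care is the $S$-$Q$-linearity of $\MC[R](\beta,\gamma)$, where one must invoke precisely the \emph{left} $Q$-linearity of $\beta$ and the \emph{left} $S$-linearity of $\gamma$ (their right $R$-linearity being instead what secures well-definedness), together with the parallel need to track the variance in the first slot when composing.
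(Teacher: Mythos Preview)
Your proof is correct and complete. The paper does not actually prove this proposition: it dismisses it as ``routine calculations'' and refers the reader to \cite[Proposition 3.2]{kat7}; your write-up supplies precisely those routine checks (object part via the preceding proposition, well-definedness and $S$-$Q$-linearity of $f\mapsto\gamma\circ f\circ\beta$, and the functor axioms with the correct handling of the contravariance), so there is nothing to compare.
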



The following proposition, which is the quantale-theoretic analogous of \cite[Theorem 3.3]{kat7}, is an immediate consequence of \cite[Lemma 6.5]{rusapal}. 

\begin{proposition}\label{natiso}
Let $Q$, $R$ e $S$ be quantales, and let $\mm[M][S]{Q}$, $\mm[N][Q]{R}$ and $\mm[O][S]{R}$ be bimodules. Then there are natural isomorphisms $$\varphi : \mc[S]_R(M \tensor_Q N, O) \rightarrow \mc[S]_Q(M,\MC[R](N,O))$$ and $$\psi : \mc[S]_R(M \tensor_Q N, O) \rightarrow \mc[Q]_R(N,\mc[S](M,O)).$$
In particular, for any bimodule $\mm[N][Q]{R}$, the functor $\MC[R](N,-) : \mc[S]_R \rightarrow \mc[S]_Q$ is a right adjoint to the functor $-\tensor_Q N : \mc[S]_Q \rightarrow \mc[S]_R$. i.e. $-\tensor_Q N \dashv \MC[R](N,-)$; and for any bimodule $\mm[M][S]{Q}$, the functor $\mc[S](M,-) : \mc[S]_R \rightarrow \mc_R$ is a right adjoint to the functor $M \tensor_Q - : \mc_R \rightarrow \mc[S]_R$. i.e. $M \tensor_Q - \dashv \mc[S](M,-)$.
\end{proposition}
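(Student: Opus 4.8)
The plan is to derive both isomorphisms from the universal property of the tensor product recorded in Theorem \ref{tensormqexists}, combined with the hom-tensor correspondence of \cite[Lemma 6.5]{rusapal}. The starting observation is that a sup-lattice morphism out of $M \tensor_Q N$ is the same datum as a $Q$-balanced bimorphism on $M \times N$: since $M \tensor_Q N$ is the quotient of the free sup-lattice on $M \times N$ by the congruence $\vartheta_R$ generated by $\rho$, a sup-lattice morphism $M \tensor_Q N \to O$ corresponds bijectively to a map $b : M \times N \to O$ that preserves arbitrary joins in each argument (this is forced by the first two families in $\rho$) and satisfies the balancing identity $b(mq,n) = b(m,qn)$ (forced by the third family). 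I would first record that, under this correspondence, the property of $h \in \mc[S]_R(M \tensor_Q N, O)$ being simultaneously a left $S$- and right $R$-module homomorphism translates into $b$ being in addition $S$-linear in its first variable and $R$-linear in its second.

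Next I would set up the currying bijection yielding $\varphi$. To a bimorphism $b$ as above, assign $\hat b : M \to \MC[R](N,O)$ by $\hat b(m)(n) = b(m,n)$. The right $R$-linearity of $b$ in $n$ guarantees $\hat b(m) \in \MC[R](N,O)$, while preservation of joins in the first variable together with the $S$- and $Q$-actions ensure, once the $S$-$Q$-bimodule structure on $\MC[R](N,O)$ from the preceding proposition is unwound, that $\hat b$ is a morphism in $\mc[S]_Q$. Conversely, any $g \in \mc[S]_Q(M, \MC[R](N,O))$ produces the bimorphism $(m,n) \mapsto g(m)(n)$, and the two assignments are mutually inverse. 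Composing with the universal-property bijection gives $\varphi$. The construction of $\psi$ is entirely symmetric: one curries in the other variable, sending $b$ to $\check b : N \to \mc[S](M,O)$ with $\check b(n)(m) = b(m,n)$, and checks that $\check b$ is a $Q$-$R$-bimodule homomorphism relative to the $Q$-$R$-bimodule structure on $\mc[S](M,O)$ supplied by the same proposition.

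Because both bijections are merely restrictions, to the appropriate bimodule-homomorphism subsets, of the single sup-lattice isomorphism furnished by \cite[Lemma 6.5]{rusapal}, their bijectivity is immediate; the genuine content is the bookkeeping of the one-sided $S$- and $R$-actions on each hom-object. Finally I would verify naturality in all three variables. Given bimodule morphisms in the slots $M$, $N$, and $O$, naturality of $\varphi$ and $\psi$ reduces to the functoriality of $- \tensor -$ and of the hom-bifunctors established in the two preceding propositions, and the relevant squares commute by direct substitution into the currying formulas.

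The main obstacle I anticipate is not any single computation but the careful matching of the left and right actions throughout: one must keep straight which quantale acts on which side in $\MC[R](N,O)$ and in $\mc[S](M,O)$, and confirm that the currying maps intertwine these actions exactly as dictated by the bimodule structures of the preceding proposition. Once the reduction to $Q$-balanced bimorphisms is in place and these action-compatibilities are checked, the isomorphisms and their naturality follow formally.
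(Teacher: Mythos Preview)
Your proposal is correct and follows essentially the same approach as the paper: the paper's entire proof is the one-line observation that the result is an immediate consequence of \cite[Lemma 6.5]{rusapal}, and your argument is precisely an unpacking of that citation, spelling out the currying bijections and the bimodule bookkeeping that the cited lemma encapsulates.
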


\begin{proposition}\label{natisom}
Let $Q$, $R$, $S$ and $T$ be quantales, and let $\mm[M][S]{Q}$, $\mm[N][Q]{R}$ and $\mm[O][R]{T}$ be bimodules. Then there is a natural isomorphism
$$(M \tensor_Q N) \tensor_R O \cong M \tensor_Q (N \tensor_R O).$$
\end{proposition}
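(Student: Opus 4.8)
The plan is to build explicit, mutually inverse morphisms between the two iterated tensor products directly from the universal property supplied by Theorem \ref{tensormqexists}, and then to read off naturality from the fact that everything is determined on elementary tensors. First I fix the relevant bimodule structures. Since $\mm[M][S]{Q}$ and $\mm[N][Q]{R}$, the tensor product $M \tensor_Q N$ is an $S$-$R$-bimodule, so $(M \tensor_Q N) \tensor_R O$ is an $S$-$T$-bimodule; dually $N \tensor_R O$ is a $Q$-$T$-bimodule and $M \tensor_Q (N \tensor_R O)$ is an $S$-$T$-bimodule. Thus both sides live in $\mc[S]_T$, and it suffices to produce a sup-lattice isomorphism compatible with the $S$- and $T$-actions.

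The core step is the construction of $\Phi \colon M \tensor_Q (N \tensor_R O) \to (M \tensor_Q N) \tensor_R O$. For each fixed $m \in M$, the assignment $(n,o) \mapsto (m \tensor n) \tensor o$ is an $R$-balanced bimorphism $N \times O \to (M \tensor_Q N) \tensor_R O$: it preserves joins in each argument, and the $R$-balance holds since $(m \tensor nr) \tensor o = ((m \tensor n)r) \tensor o = (m \tensor n) \tensor (ro)$. By the universal property of $N \tensor_R O$ it factors through a sup-lattice morphism $\lambda_m$ with $\lambda_m(n \tensor o) = (m \tensor n) \tensor o$. One then checks that $(m,w) \mapsto \lambda_m(w)$ is itself a $Q$-balanced bimorphism $M \times (N \tensor_R O) \to (M \tensor_Q N) \tensor_R O$: it is join-preserving in $m$ and in $w$, and $Q$-balance holds on generators because $\lambda_{mq}(n \tensor o) = (mq \tensor n) \tensor o = (m \tensor qn) \tensor o = \lambda_m(qn \tensor o) = \lambda_m(q(n \tensor o))$, both sides preserving joins and hence agreeing on all of $N \tensor_R O$. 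This induces $\Phi$ with $\Phi(m \tensor (n \tensor o)) = (m \tensor n) \tensor o$. The candidate inverse $\Psi$, with $\Psi((m \tensor n) \tensor o) = m \tensor (n \tensor o)$, is built symmetrically.

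It remains to verify that $\Phi$ and $\Psi$ are mutually inverse and natural. Since both composites $\Psi \Phi$ and $\Phi \Psi$ are sup-lattice endomorphisms and the elementary tensors $m \tensor (n \tensor o)$, respectively $(m \tensor n) \tensor o$, join-generate their tensor products, it is enough to check $\Psi \Phi = \id$ and $\Phi \Psi = \id$ on these generators, which is immediate from the defining formulas; compatibility with the $S$- and $T$-actions is checked the same way. For naturality, given bimodule morphisms $f$, $g$, $h$ of $M$, $N$, $O$, the maps $\Phi$, $(f \tensor g) \tensor h$ and $f \tensor (g \tensor h)$ are all determined by their values on elementary tensors, so the relevant square commutes on generators and therefore everywhere.

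The main obstacle is the repeated, careful application of the universal property: at each stage a map specified only on elementary tensors must be shown to descend to a well-defined sup-lattice morphism, i.e. to respect the congruence $\vartheta$ of Theorem \ref{tensormqexists}, and each balancing identity must be verified in a form stable under arbitrary joins. Because a general element of a quantale-module tensor product is an \emph{arbitrary} join of elementary tensors rather than a finite sum, it is precisely this join-stability that legitimizes ``checking on generators'' and demands the most attention. I note in passing that one can also argue abstractly: applying the adjunctions of Proposition \ref{natiso} to a test bimodule $P \in \mc[S]_T$ reduces both $\mc[S]_T((M \tensor_Q N) \tensor_R O, P)$ and $\mc[S]_T(M \tensor_Q (N \tensor_R O), P)$ to $\mc[S]_Q(M, \MC[R](N, \MC[T](O,P)))$, whence a Yoneda argument yields the isomorphism; the obstacle then migrates to establishing the internal currying isomorphism $\MC[T](N \tensor_R O, P) \cong \MC[R](N, \MC[T](O,P))$ and tracking all the bimodule structures.
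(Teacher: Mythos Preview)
Your proof is correct and follows essentially the same approach as the paper: you build the two maps by iterated use of the universal property of the tensor product (your $\lambda_m$ is the paper's $\overline{\alpha}_x$, your $\Phi$ its $\overline{\alpha}$, and your $\Psi$ its $\overline{\beta}$), then check on elementary tensors that they are mutual inverses. If anything you are slightly more thorough, since you also verify the $S$-$T$-bimodule compatibility and naturality, which the paper leaves implicit.
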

\begin{proof}
For all $x \in M$, let us define
$$\alpha_x: (y,z) \in N \times O \mapsto (x\tensor y)\tensor z \in (M\tensor N)\tensor O,$$
and show that it is a bimorphism. Indeed, we have:
$$\begin{array}{l}
\alpha_x(yr,z) = (x\tensor yr)\tensor z = (x\tensor y)r\tensor z = (x\tensor y)\tensor rz = \alpha_x(y,rz), \\
\alpha_x\left(\bigvee Y,z\right) = \left( x\tensor \bigvee Y\right)\tensor z = \left(x\tensor \bigvee\limits_{y \in Y} y\right)\tensor z = \left(\bigvee\limits_{y \in Y} (x\tensor y)\right)\tensor z = \\
= \bigvee\limits_{y \in Y} ((x\tensor y)\tensor z) = \bigvee\limits_{y \in Y} \alpha_x(y,z), \ \text{ and, similarly,} \\
\alpha_x\left(y,\bigvee Z\right) = \bigvee\limits_{z \in Z} \alpha_x(y,z).
\end{array}$$
So, there exists a unique sup-lattice morphism $\overline{\alpha}_x: N \tensor O \rightarrow (M\tensor N)\tensor O$ extending $\alpha_x$; observe that, for all $x \in M$, $y \in N$, and $z \in O$, $\ov\alpha_x(y\tensor z) = (x\tensor y)\tensor z$.

Now, let
$$\alpha: \left(x,\bigvee_{i \in I}y_i\tensor z_i\right) \in M \times (N \tensor O) \mapsto \overline{\alpha}_x\left(\bigvee_{i \in I}y_i \tensor z_i\right) \in (M\tensor N)\tensor O,$$
which is a bimorphism too. Indeed, we have
$$\begin{array}{l}
\alpha\left(\bigvee X,\bigvee\limits_{i \in I}y_i \tensor z_i\right) = \overline{\alpha}_{\bigvee X}\left(\bigvee\limits_{i \in I}y_i \tensor z_i\right) = \bigvee\limits_{i \in I}\overline{\alpha}_{\bigvee X}(y_i \tensor z_i) =\\ 
=\bigvee\limits_{i \in I}\alpha_{\bigvee X}(y_i , z_i) = \bigvee\limits_{i \in I}\left( \bigvee X\tensor y_i\right)\tensor z_i = \bigvee\limits_{i \in I}\bigvee\limits_{x \in X}(x\tensor y_i)\tensor z_i = \\
= \bigvee\limits_{x \in X}\bigvee\limits_{i \in I}(x\tensor y_i)\tensor z_i = \bigvee\limits_{x \in X}\bigvee\limits_{i \in I}\overline{\alpha}_x(y_i\tensor z_i)= \bigvee\limits_{x \in X}\overline{\alpha}_x\left(\bigvee\limits_{i \in I}y_i\tensor z_i\right) = \\
= \bigvee\limits_{x\in X}\alpha\left(x,\bigvee\limits_{i \in I}y_i \tensor z_i\right).
\end{array}$$

So, by the universal property of the tensor product $M \otimes_Q (N \otimes_R O)$, 
there exists a unique sup-lattice morphism $\overline{\alpha}: M \otimes_Q (N \otimes_R O) \longrightarrow (M \otimes_Q N)\otimes_R O$

such that, for all $x \in M$, $y \in N$, and $z \in O$, we have $\overline{\alpha}\bigl(x \otimes (y \otimes z)\bigr) = (x \otimes y) \otimes z$.

Symmetrically, for each $z \in O$ let us define $\beta_z : M \times N \longrightarrow M \otimes_Q (N \otimes_R O)$, 

$\beta_z(x,y) = x \otimes (y \otimes z)$.

As before, $\beta_z$ is a bimorphism, so there exists a unique sup-lattice morphism

$\overline{\beta}_z : M \otimes_Q N \longrightarrow M \otimes_Q (N \otimes_R O)$

such that $\overline{\beta}_z(x \otimes y) = x \otimes (y \otimes z)$ for all $x \in M$, $y \in N$.  
Now, define

$\beta : (M \otimes_Q N) \times O \longrightarrow M \otimes_Q (N \otimes_R O), \quad \beta\!\left(\bigvee_j x_j \otimes y_j, z\right) = \bigvee_j \overline{\beta}_z(x_j \otimes y_j)$.

Again, $\beta$ is a bimorphism, so by the universal property of the
tensor product $(M \otimes_Q N)\otimes_R O$, there exists a unique sup-lattice morphism

$\overline{\beta} : (M \otimes_Q N)\otimes_R O \longrightarrow M \otimes_Q (N \otimes_R O)$ such that, for all $x \in M$, $y \in N$, 

$z \in O$, $\overline{\beta}\bigl((x \otimes y) \otimes z\bigr) = x \otimes (y \otimes z)$.

So, $\overline{\beta} \circ \overline{\alpha}\bigl(x \otimes (y \otimes z)\bigr)
= \overline{\beta}\bigl((x \otimes y)\otimes z\bigr)
= x \otimes (y \otimes z)$,
and

$\overline{\alpha} \circ \overline{\beta}\bigl((x \otimes y)\otimes z\bigr)
= \overline{\alpha}\bigl(x \otimes (y \otimes z)\bigr)
= (x \otimes y) \otimes z$.

Since both morphisms preserve arbitrary joins and the tensors generate the respective tensor products, we conclude that

$\overline{\beta} \circ \overline{\alpha} = \mathrm{id}_{M \otimes_Q (N \otimes_R O)} \quad\text{and}\quad \overline{\alpha} \circ \overline{\beta} = \mathrm{id}_{(M \otimes_Q N)\otimes_R O}$.

Therefore, $\overline{\alpha}$ is an isomorphism with inverse $\overline{\beta}$.

\end{proof}

\begin{proposition}
$E:=\End(\mm{})$ is a quantale and $M$ is a $Q$-$E$-bimodule.
\end{proposition}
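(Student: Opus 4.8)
The plan is to equip the underlying set $E=\End(\mm{})$ — the collection of all left $Q$-module endomorphisms of $M$ — with pointwise join as its sup-lattice structure, composition (read with the appropriate convention) as its monoid multiplication, and $\id_M$ as its unit, and then to verify the three defining clauses of a quantale. The first step is to show that $E$ is a sup-lattice, for which the key observation is that the set of $Q$-endomorphisms is closed under pointwise joins: given a family $\{f_i\}_{i\in I}\subseteq E$, the map $u\mapsto\bigvee_{i\in I}f_i(u)$ again preserves arbitrary joins and commutes with the $Q$-action, the latter precisely because scalar multiplication distributes over joins, so that $(\bigvee_i f_i)(a\cdot u)=\bigvee_i f_i(a\cdot u)=\bigvee_i a\cdot f_i(u)=a\cdot\bigvee_i f_i(u)$. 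Since the constant map $u\mapsto\bot$ is also a $Q$-endomorphism and serves as bottom, $E$ is a subset of the complete lattice $M^M$ closed under all joins, hence itself a sup-lattice with joins computed pointwise.

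Next I would check that $(E,\circ,\id_M)$ is a monoid: the composite of two $Q$-endomorphisms is again one, $\id_M$ is a two-sided unit, and associativity is inherited from composition of maps. The remaining clause — biresiduation of the product, equivalently its distributivity over arbitrary joins in each variable — follows by a pointwise computation. On one side one uses only that joins in $E$ are pointwise, while on the other side one additionally invokes the fact that each endomorphism preserves joins; both verifications are routine and establish that $E$ is a quantale. For the bimodule claim I would then define the right $E$-action on $M$ by $u\cdot f\bydef f(u)$, choosing the multiplication of $E$ so that this evaluation action is associative on the right: concretely the product in $E$ is taken to be $f\cdot g=g\circ f$, whence $(u\cdot f)\cdot g=g(f(u))=(g\circ f)(u)=u\cdot(f\cdot g)$ and $u\cdot\id_M=u$. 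That this action distributes over joins in both arguments — and is therefore biresiduated — is immediate from the pointwise description of joins in $E$ together with join-preservation of the individual endomorphisms. Finally, compatibility of the two actions, $(a\cdot u)\cdot f=a\cdot(u\cdot f)$, unwinds to $f(a\cdot u)=a\cdot f(u)$, which holds by the very definition of a $Q$-module homomorphism, and the left $Q$-action is the given one.

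The only genuinely delicate point is the bookkeeping of the multiplication convention on $E$: since $Q$ already acts on the left, $E$ must act on the right, and for the evaluation action $u\cdot f=f(u)$ to be associative the product on $E$ has to be the opposite of ordinary composition. Once this convention is fixed, everything reduces to the pointwise verifications indicated above, each of which uses nothing beyond the distributivity of scalar multiplication over joins and the join-preservation of module morphisms; no appeal to any of the earlier tensor-product results is needed.
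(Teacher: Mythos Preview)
Your proposal is correct and follows essentially the same approach as the paper: pointwise joins for the sup-lattice structure, opposite composition $f\cdot g=g\circ f$ as the product, and evaluation $u\cdot f=f(u)$ as the right $E$-action. If anything you are more thorough, since you explicitly check that pointwise joins of $Q$-endomorphisms are again $Q$-endomorphisms and you verify the bimodule compatibility $(a\cdot u)\cdot f=a\cdot(u\cdot f)$, both of which the paper leaves implicit.
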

\begin{proof}
We note  that $\la \End_Q(M), \bigvee \ra$ is a sup-lattice, and $\la \End_Q(M), \cdot, \id_M \ra$ is a monoid, where the product $h\cdot g$ is the composition $g\circ h$.

Let $h \in \End_Q(M)$ and $\{g_i\}_{i \in I} \subseteq \End_Q(M)$. Note that
$$\begin{array}{l}
\left(\left(\bigvee\limits_{i \in I} g_i\right)\cdot h\right)(x) = \\
= \left(h \circ \left(\bigvee\limits_{i \in I} g_i\right)\right) (x) = h\left(\left(\bigvee\limits_{i \in I} g_i\right)(x)\right) = \\
= h\left(\bigvee\limits_{i \in I} g_i(x)\right) = \bigvee\limits_{i \in I} h(g_i(x)) = \\
= \bigvee\limits_{i \in I} (h\circ g_i)(x) = \bigvee\limits_{i \in I} (g_i\cdot h)(x).
\end{array}$$
Similarly, we have $\left( h\cdot\left(\bigvee_{i \in I} g_i\right)\right)(x) = \bigvee_{i \in I} (h\cdot g_i )(x)$, whence $E=\End_Q(M)$ is a quantale.

Furthermore, we have that $M$ is a right $E$-module. Indeed, taking $xh = h(x)$, $x \in M$, $h \in E$, 
$$x(f\cdot g) =  (f\cdot g)(x) = (g\circ f)(x) = g(f(x)) = f(x)g=  (xf)g.$$
We have also, for $x \in M$, $f \in E$, $F \subseteq E$ and $N \subseteq M$,
$$x\left(\bigvee F\right) = \left(\bigvee F\right)(x) = \left(\bigvee_{f \in F} f \right)(x) = \bigvee_{f \in F} f(x) = \bigvee_{f \in F} xf $$
and
$$\left(\bigvee N\right)f = f\left(\bigvee N\right) = f\left(\bigvee_{x \in N} f \right)(x) = \bigvee_{x \in N} f(x) = \bigvee_{x \in N} xf.$$
It is clear that $x\id_E = \id_E(x) = x$.
\end{proof}




\section{Projective generators}
\label{progenerator}

As we are going to see, projective generators of the categories of quantale modules play a prominent role in the characterization of Morita equivalent quantales. Projective modules have been characterized in \cite{russajl}; in what follows, we shall characterize generators of the categories of quantale modules.

\begin{lemma}
For every endomorphism $\alpha: \mm[Q]{} \rightarrow \mm[Q]{}$ of the left $Q$-module $Q$ there exists $r \in Q$ such that $\alpha(q) = qr$ for all $q \in Q$. 
\end{lemma}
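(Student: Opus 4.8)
The plan is to exhibit the element $r$ explicitly and then verify the formula in one stroke, using only that $\alpha$ respects the left action and that $1$ is the identity of the monoid $(Q,\cdot,1)$.

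First I would set $r \bydef \alpha(1)$. This is in fact the only possible choice: any $r$ with $\alpha(q)=qr$ for all $q$ must in particular satisfy $\alpha(1)=1\cdot r=r$, which simultaneously pins down $r$ and gives its uniqueness. Next, recall that an endomorphism of the left module $\mm[Q]{}$ is, by definition, a sup-lattice map preserving the scalar action, i.e. $\alpha(a\cdot u)=a\cdot\alpha(u)$ for all $a,u\in Q$, where the action of $Q$ on itself is simply the quantale product. Since $1$ is the monoid identity, every $q\in Q$ factors as $q=q\cdot 1$, and applying $\alpha$ with scalar $a=q$ and module element $u=1$ yields
$$\alpha(q)=\alpha(q\cdot 1)=q\cdot\alpha(1)=q\cdot r=qr,$$
which is exactly the claimed formula.

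I do not expect any genuine obstacle: the statement is essentially the observation that the cyclic module generated by $1$ is determined by the image of its generator. The only point to state carefully is that the homomorphism property used is preservation of the \emph{left} action, so that the scalar $q$ can be extracted on the left; notably, the join-preservation of $\alpha$ is not needed for this particular computation. If desired, one can close by noting the converse direction for completeness: for each $r\in Q$ the right-multiplication map $q\mapsto qr$ preserves arbitrary joins (by distributivity of the product over joins) and the left action (by associativity), hence is a left $Q$-module endomorphism. Thus $r\mapsto(\,\cdot\,r)$ is a bijection between $Q$ and $\End(\mm[Q]{})$, of which the lemma asserts surjectivity.
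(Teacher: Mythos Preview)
Your proof is correct and follows exactly the same route as the paper: set $r=\alpha(1)$ and use $\alpha(q)=\alpha(q\cdot 1)=q\cdot\alpha(1)=qr$. Your additional remarks on uniqueness and the converse are sound but go beyond what the lemma asserts.
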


\begin{proof}
Indeed, if $\alpha(1) := r \in Q$, then $\alpha(x) = \alpha(x1) = x\alpha(1) = xr$, for any $x \in Q$.
\end{proof}

\begin{proposition}\label{projchar}
A $Q$-module $\mm{}$ is projective if only if it is isomorphic to the image of some free module $Q^X$ under an idempotent endomorphism. In particular, a cyclic $Q$-module $M$ is projective if and only if it is isomorphic to $Q \cdot u$ for some multiplicatively idempotent $u \in Q$.
\end{proposition}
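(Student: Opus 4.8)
The plan is to prove the first assertion through the standard equivalence \emph{projective $\Leftrightarrow$ retract of a free module}, and then read off the cyclic case from the Lemma above. For the ``if'' direction, suppose $\mm{} \cong \operatorname{im}(e)$ for an idempotent endomorphism $e \colon Q^X \to Q^X$. The image of a $Q$-module homomorphism is a submodule: it is closed under arbitrary joins (since $e$ preserves them, so $e(\bigvee x_i) = \bigvee e(x_i)$), contains $\bot = e(\bot)$, and is stable under the action. Writing $\iota \colon \operatorname{im}(e) \hookrightarrow Q^X$ for the inclusion and $\bar e \colon Q^X \to \operatorname{im}(e)$ for the corestriction of $e$, idempotence gives $\bar e \circ \iota = \id_{\operatorname{im}(e)}$, so $\operatorname{im}(e)$ is a retract of $Q^X$. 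Free modules are projective: by the universal property $\mc(Q^X, -) \cong (-)^X$, a morphism $Q^X \to C$ is just a family $(c_x)_{x \in X}$ of elements of $C$, and along any surjection $g \colon B \twoheadrightarrow C$ one lifts each $c_x$ to some $b_x$ with $g(b_x) = c_x$, inducing the required lift. Since a retract of a projective is projective, $\mm{}$ is projective.

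For the ``only if'' direction, I would choose any surjection $\pi \colon Q^X \twoheadrightarrow M$ from a free module onto $M$ (such exist, e.g.\ with $X = M$ via the counit of the free--forgetful adjunction). As $M$ is projective, $\id_M$ lifts through $\pi$ to a section $s \colon M \to Q^X$ with $\pi \circ s = \id_M$. Then $e := s \circ \pi$ is an endomorphism of $Q^X$ with $e \circ e = s \circ (\pi \circ s) \circ \pi = s \circ \pi = e$, hence idempotent, and $\operatorname{im}(e) = s(\pi(Q^X)) = s(M) \cong M$ because $s$ is a split monomorphism. This establishes the first assertion.

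For the cyclic case I would specialize to $X$ a singleton, so that $Q^X = Q$ is the free module on one generator and every cyclic module is a surjective image of $Q$. By the first assertion, a cyclic $M$ is projective iff $M \cong \operatorname{im}(e)$ for some idempotent endomorphism $e$ of the left module $Q$. By the preceding Lemma, $e(q) = qr$ with $r := e(1)$; idempotence forces $q r^2 = e(e(q)) = e(q) = qr$ for all $q$, i.e.\ $r^2 = r$ upon taking $q = 1$, and conversely any multiplicatively idempotent $r$ yields the idempotent endomorphism $q \mapsto qr$. Its image is $Qr = Q \cdot r$, which is cyclic (generated by $r = 1 \cdot r$ and closed under joins and the action by biresiduation). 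This gives $M \cong Q \cdot u$ with $u$ idempotent, and conversely.

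The conceptual steps are routine category theory; the points I expect to require care are verifying that $\operatorname{im}(e)$ is genuinely a submodule in the sup-lattice sense (join-closedness together with $\bot \in \operatorname{im}(e)$) and that free modules are projective with respect to surjections, both resting on the universal property of $Q^X$. The genuinely quantale-specific step, identifying idempotent endomorphisms of $Q$ with multiplicatively idempotent elements, is immediate from the Lemma and poses no real obstacle.
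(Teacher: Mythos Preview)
Your argument is correct and complete. The paper's own proof is a single sentence deferring to \cite[Theorem 2.4]{russajl}, so there is no detailed argument to compare against; what you have written is precisely the standard ``projective $\Leftrightarrow$ retract of a free module'' proof that one would expect that citation to unpack, together with the specialization to the cyclic case via the preceding Lemma. One small remark: when you write ``By the first assertion, a cyclic $M$ is projective iff $M \cong \operatorname{im}(e)$ for some idempotent endomorphism $e$ of the left module $Q$,'' this does not follow from the first assertion as \emph{stated} (which only guarantees some $Q^X$), but it does follow from your \emph{proof} of the ``only if'' direction, since there you were free to choose the surjection and for cyclic $M$ you may take $X$ a singleton. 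It would be slightly cleaner to say this explicitly rather than invoking the first assertion as a black box.
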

\begin{proof}
Follows readily from \cite[Theorem 2.4]{russajl}.
\end{proof}

Let $E:=\End(\mm{})$ be the quantale of all endomorphisms of a $Q$-module $\mm{}$. So, considering endomorphisms of $\mm{}$ operating on the right of $\mm{}$, it's easy to see that $M$ becomes an $Q$-$E$-bimodule, i.e. $\mm{E}$. We denote $N = M^* := \mc(\mm{},\mm[Q]{})$ for the \textit{dual} $E$-$Q$-bimodule $N=M^*$ of the $Q$-$E$-bimodule $M$, where the $E$-$Q$-bimodule structure on $N$ is defined by the rule $$m(hfq) = ((mh)f)q$$ for given $m \in M$, $h \in E$, $f \in N$ and $q \in Q$, where the elements of both $E$ and $N$ operate on the right of $M$. In particular, we have $m(fq) = (mf)q$ and $m(hf) = (mh)f$
(by “$MNQ$-associativity” and “$MEN$-associativity”, respectively (see \cite[Section 18C]{lam})). Here and in the sequel, the elements $m, f, q, h$ (and $m', f', q', h'$) are in $M, N, Q$ and $E$, respectively. By means of the “$MNM$-associativity”, we also define the endomorphism $fm \in E$ by $m'(fm) = (m'f)m$ for any $m \in M$ and $f \in N$. Then, in the same way as it has been done in \cite[Section 18C]{lam}, one can show that $fm$ is indeed an endomorphism of $\mm{}$ and $(f'm)f = f'(mf)$ for any $m \in M, f, f' \in N$ (“$NMN$-associativity”), as well as the following.

\begin{lemma}
The assignments $(m,f) \mapsto mf$ and $(f,m) \mapsto fm$ define the $(Q,Q)$-homomorphism $\alpha: M \tensor_E M^* \rightarrow Q$ and the $(E,E)$-homomorphism $\beta: M^* \tensor_Q M \rightarrow E$, respectively.
\end{lemma}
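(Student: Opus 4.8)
The plan is to construct each map by invoking the universal property of the tensor product (Theorem \ref{tensormqexists}): it suffices to check that the raw assignment is a balanced bimorphism, after which it factors uniquely through the tensor product as a sup-lattice morphism, and then to verify the claimed bimodule-homomorphism properties directly on the generating tensors.

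For $\alpha$, I would first recall that $M$ is a right $E$-module, $M^*$ a left $E$-module, and that $mf = f(m)$, so that $M \tensor_E M^*$ makes sense. I would then check that $(m,f)\mapsto mf$ is an $E$-bimorphism. Join-preservation in the first variable is immediate since each $f$ preserves joins: $(\bigvee_i m_i)f = f(\bigvee_i m_i) = \bigvee_i f(m_i) = \bigvee_i (m_i f)$; join-preservation in the second variable uses that joins in $M^* = \mc(M,Q)$ are computed pointwise, so $m(\bigvee_i f_i) = \bigvee_i (m f_i)$. The balancing condition $(mh)f = m(hf)$ for $h \in E$ is exactly the $MEN$-associativity already established. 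Hence $\alpha$ exists as a sup-lattice morphism with $\alpha(m \tensor f) = mf$. Finally, $\alpha$ is a $(Q,Q)$-homomorphism: left $Q$-linearity $(qm)f = q(mf)$ is just the left $Q$-linearity of $f$, while right $Q$-linearity $m(fq) = (mf)q$ is the $MNQ$-associativity, i.e. the very definition of the right $Q$-action on $M^*$.

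For $\beta$, the same scheme applies over $Q$, but every identity must now be verified in $E$, that is, after evaluating both sides on an arbitrary $m' \in M$ and using that products and joins in $E$ are computed pointwise. Using the defining relation $m'(fm) = (m'f)m$, I would check: join-preservation in each variable, which reduces via pointwise joins in $E$ and $M^*$ and distributivity of the scalar action to identities such as $m'((\bigvee_i f_i)m) = \bigvee_i ((m'f_i)m) = \bigvee_i (m'(f_i m))$; balancing over $Q$, $(fq)m = f(qm)$, which upon evaluation becomes $((m'f)q)m = (m'f)(qm)$, an instance of the associativity $(ab)u = a(bu)$ of the $Q$-action; and finally the $(E,E)$-homomorphism property. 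Here left $E$-linearity $(hf)m = h\cdot(fm)$ follows, after evaluation, from $MEN$-associativity together with $m'(h\cdot g) = (m'h)g$, and right $E$-linearity $f(mh) = (fm)\cdot h$ follows from the $Q$-linearity of the endomorphism $h$.

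I expect the $\beta$ part to be the main obstacle, not because it is deep but because every assertion lives inside $E = \End_Q(M)$ and therefore has to be unwound by testing against an arbitrary $m' \in M$, with careful attention to which of the several associativity laws ($MNQ$, $MEN$, $MNM$, $NMN$) is invoked at each step and to the convention $h\cdot g = g\circ h$ for the product in $E$. The only genuinely structural input beyond these associativities is that joins in the hom-modules $E$ and $M^*$ are pointwise, which supplies the distributivity needed above and allows the pointwise verifications to conclude the corresponding global identities.
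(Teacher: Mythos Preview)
Your proposal is correct and follows essentially the same route as the paper: define the raw map on pairs, verify it is a balanced bimorphism so that the universal property of the tensor product yields a sup-lattice morphism, and then check the bimodule-homomorphism conditions on generating tensors. The paper carries out exactly these steps with explicit elementwise computations, including the pointwise unwinding in $E$ that you correctly flag as the main bookkeeping burden for $\beta$.
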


\begin{proof}

Let us define the operations

$$\begin{array}{ll}
(h,f) \in E \times M^* \mapsto hf = f \circ h \in M^*, & \text{ and}\\
(f,q) \in  M^* \times Q \mapsto fq \in M^*, & \text{ where }fq :x \in M \mapsto f(x)q \in Q.
\end{array}$$

Next, we set
$$\alpha': (m,f) \in M \times M^* \mapsto f(m) \in Q, \text{and show that it is a bimorphism.}$$

Indeed, since $M$ is a right $E$-module, taking $mh = h(m)$, $m \in M$, $h \in E$, we have
$$\begin{array}{l}
\alpha'(mh,f) = \alpha'(h(m),f) = f(h(m)) = f \circ h(m) = hf(m) = \alpha'(m,hf).
\end{array}$$
Furthermore,
$$\begin{array}{l} 
\alpha'\left(\bigvee X,f\right) = f\left( \bigvee X\right) = \bigvee\limits_{x \in X} f(x) =\bigvee\limits_{x \in X} (\alpha'(x,f)) \text{ and }\\
\alpha'\left(x,\bigvee F\right) = \bigvee F (x) = \bigvee\limits_{f \in F} f(x) = \bigvee\limits_{f \in F} (\alpha'(x,f)).
\end{array}$$

So, there exists a unique sup-lattice morphism $\alpha:M \tensor M^* \to Q$ extending $\alpha'$, where, for all $m \in M$ and $f \in M^*$, $(m\tensor f) \in M \tensor M^* \mapsto f(m) \in Q$.

Note also that $\alpha$ is a $Q$-$Q$-homomorphism, because

$$\begin{array}{l}
\alpha\left(q\left(\bigvee\limits_{i \in I}x_i\tensor f_i\right)\right) = \alpha\left(\bigvee\limits_{i \in I}qx_i\tensor f_i\right) = \bigvee\limits_{i \in I}\alpha(qx_i\tensor f_i) = \\ 
= \bigvee\limits_{i \in I}f_i(qx_i) = \bigvee\limits_{i \in I}qf_i(x_i) = q\bigvee\limits_{i \in I}f_i(x_i) = q \bigvee\limits_{i \in I}\alpha(x_i\tensor f_i) = \\
= q\alpha \left(\bigvee\limits_{i \in I}x_i\tensor f_i\right),
\\ \text{and} \\
\alpha\left(\left(\bigvee\limits_{i \in I}x_i\tensor f_i\right)q\right) = \alpha\left(\bigvee\limits_{i \in I}x_i\tensor f_iq\right) = \bigvee\limits_{i \in I}\alpha(x_i\tensor f_iq) = \\ 
= \bigvee\limits_{i \in I}f_iq(x_i) = \bigvee\limits_{i \in I}f_i(x_i)q = \left(\bigvee\limits_{i \in I}f_i(x_i)\right)q = \left(\bigvee\limits_{i \in I}\alpha(x_i\tensor f_i)\right)q = \\
= \alpha \left(\bigvee\limits_{i \in I}(x_i\tensor f_i)\right)q.
\end{array}$$

Now consider the map 

$$\beta': (f,m) \in M^* \times M \mapsto fm \in E, \text{and show that it is a bimorphism.}$$

Indeed, since $M^*$ is a right $Q$-module, taking $fq = f(x)q$, $x \in M$, $q \in Q$, we have
$$\begin{array}{l}
\beta'(fq,m) = (fq)m = (f(x)q)m = f(x)(qm) = f\cdot(qm) = \beta'(f,qm).
\end{array}$$
Furthermore,
$$\begin{array}{l} 
\beta'\left(\bigvee F,m\right) = \left(\bigvee F\right)m =\left(\bigvee\limits_{f \in F} f\right) m = \left(\bigvee\limits_{f \in F} fm\right) =\bigvee\limits_{f \in F} \beta'(f,m), \\ \text{and }\\ \\
\beta'\left(f,\bigvee Y\right) =  f \bigvee Y = \bigvee\limits_{m \in Y} fm =\bigvee\limits_{m \in Y} \beta'(f,m). 
\end{array}$$

So, there exists a unique sup-lattice morphism $\beta:M^* \tensor M \to E$ extending $\beta'$, where, for all $m \in M$ and $f \in M^*$, $(f\tensor m) \in M^* \tensor M \mapsto fm \in E$.

We have also that $\beta$ is a $E$-$E$-homomorphism, because

$$\begin{array}{l}
\beta\left(h\left(\bigvee\limits_{i \in I}f_i\tensor m_i\right)\right)(x) = \beta\left(\bigvee\limits_{i \in I}hf_i\tensor m_i\right)(x) = \bigvee\limits_{i \in I}\beta(hf_i\tensor m_i)(x) =\\
= \bigvee\limits_{i \in I}(hf_i)(x)m_i =\bigvee\limits_{i \in I}(f_i\circ h)(x)m_i = \bigvee\limits_{i \in I}f_i (h(x))m_i = \\ 
= \bigvee\limits_{i \in I}\beta(f_i\tensor m_i)(h(x)) = \bigvee\limits_{i \in I}(\beta(f_i\tensor m_i)\circ h)(x) = \bigvee\limits_{i \in I}h\beta(f_i\tensor m_i)(x) = \\
= h \left(\bigvee\limits_{i \in I}\beta(f_i\tensor m_i)\right)(x) = h\beta \left(\bigvee\limits_{i \in I}f_i\tensor m_i\right)(x)\\
\text{and} \\
\beta\left(\left(\bigvee\limits_{i \in I}f_i\tensor m_i\right)h\right)(x) = \beta\left(\bigvee\limits_{i \in I}f_i\tensor m_ih\right)(x) = \bigvee\limits_{i \in I}\beta(f_i\tensor m_ih)(x) =\\
= \bigvee\limits_{i \in I}\beta(f_i\tensor h(m_i))(x) = \bigvee\limits_{i \in I}f_i(x)h(m_i) = \bigvee\limits_{i \in I}f_i(x)h(m_i) = \\
= \bigvee\limits_{i \in I}h(f_i(x)(m_i)) = \left(\bigvee\limits_{i \in I}h(\beta(f_i\tensor m_i))(x)\right) = \left(\bigvee\limits_{i \in I}h \circ (\beta f_i\tensor m_i))(x)\right) = \\
=  \left(\bigvee\limits_{i \in I}(\beta(f_i\tensor m_i)h)(x)\right) = \left(\beta\left(\bigvee\limits_{i \in I}(f_i\tensor m_i)\right)h\right)(x).
\end{array}$$

\end{proof}

Let us now consider the dual left $Q$-module of a free left module $\mm[Q^n][Q]{}$ and consider
the standard generating set $\{e_1, \ldots, e_n\}$ of  $\mm[Q^n][Q]{}$, namely the set of $n$-tuples $e_i$, $i= 1, \ldots, n$, with all components equals to $\bot$ except for the $i$-th, which is equal to $1$. Now let, for all $i = 1, \ldots, n$, $e_i^{*} \in (\mm[Q^n][Q]{})^{*} = \mc (\mm[Q^n][Q]{},\mm[Q][Q]{})$ be defined by
\[
e_j e_i^{*} :=
\begin{cases}
1, & \text{if } i = j, \\
\bot, & \text{if } i \neq j.
\end{cases}
\]
It is easy to see that $\{e_1^{*}, \ldots, e_n^{*}\}$ is a minimal generating set for $(\mm[Q^n][Q]{})^{*}$, called (with a slght abuse) a \emph{dual basis}, of $(\mm[Q^n][Q]{})^{*}$. 

Furthermore, it is easy to see that an analogous definition can be given for the right dual $Q$-module $(\mm[Q^n][Q]{})^{*}$.

\begin{proposition}\label{betaiso}
Let $\mm{}$ be a left $Q$-module, and let us consider its dual module $M^*:=\mc (M,Q)$ and the quantale $E:=\End(\mm{})$. The following are equivalent.
\begin{enumerate}
\item[](a) $M$ is projective;
\item[](b) $\beta$ is an isomorphism;
\item[](c) $\beta$ is a surjective homomorphism.
\end{enumerate}   
\end{proposition}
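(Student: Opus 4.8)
The plan is to make the whole statement hinge on a single condition, the existence of a \emph{dual basis} for $M$: a family $\{(g_x,n_x)\}_{x\in X}\subseteq M^{*}\times M$ such that $\bigvee_{x} g_x(m)\,n_x = m$ for all $m\in M$. Since $\beta\bigl(\bigvee_x g_x\tensor n_x\bigr)$ is exactly the endomorphism $m\mapsto\bigvee_x g_x(m)\,n_x$, possessing a dual basis is the same as having $\id_M\in\operatorname{im}\beta$. I will show that (b)$\Rightarrow$(c) is trivial, then prove (c)$\Rightarrow$(b), (c)$\Rightarrow$(a) and (a)$\Rightarrow$(c); together these give (a)$\Leftrightarrow$(c)$\Leftrightarrow$(b).

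The implication (b)$\Rightarrow$(c) is immediate. For (c)$\Rightarrow$(b) I would show that surjectivity of the $E$-$E$-homomorphism $\beta$ (established in the preceding lemma) already forces invertibility. By (c) choose a dual basis $\{(g_x,n_x)\}$ with $\beta\bigl(\bigvee_x g_x\tensor n_x\bigr)=\id_M$, and define $\gamma\colon E\to M^{*}\tensor_Q M$ by $\gamma(h)=\bigvee_x g_x\tensor h(n_x)$. This is a sup-lattice morphism because $h\mapsto h(n_x)$ preserves joins and $\tensor$ is join-preserving in each argument. One checks $\beta\gamma=\id_E$ directly, since $\beta\gamma(h)(m)=\bigvee_x g_x(m)\,h(n_x)=h\bigl(\bigvee_x g_x(m)\,n_x\bigr)=h(m)$. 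For $\gamma\beta=\id$ I would evaluate on a simple tensor: $\gamma\bigl(\beta(f\tensor m)\bigr)=\bigvee_x g_x\tensor f(n_x)\,m=\bigl(\bigvee_x g_x f(n_x)\bigr)\tensor m$, using the $Q$-balancing relation in $\rho$ to move the scalar $f(n_x)$ across $\tensor$; applying the $Q$-linear map $f$ to the dual basis identity gives $\bigvee_x g_x(m')\,f(n_x)=f(m')$, i.e. $\bigvee_x g_x f(n_x)=f$ in $M^{*}$, so $\gamma\bigl(\beta(f\tensor m)\bigr)=f\tensor m$. As both sides preserve joins and simple tensors generate $M^{*}\tensor_Q M$, we get $\gamma\beta=\id$, hence (b).

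The equivalence of (a) and (c) passes through Proposition \ref{projchar}. For (c)$\Rightarrow$(a), a dual basis $\{(g_x,n_x)\}$ gives $Q$-module morphisms $g\colon m\mapsto(g_x(m))_{x}\in Q^{X}$ and $h\colon(a_x)_x\mapsto\bigvee_x a_x n_x\in M$ with $h\circ g=\id_M$; then $e:=g\circ h$ is an idempotent endomorphism of the free module $Q^{X}$ whose image is isomorphic to $M$, so $M$ is projective by Proposition \ref{projchar}. For (a)$\Rightarrow$(c), projectivity makes $M$ a retract of a free module $Q^{X}$ (Proposition \ref{projchar}): there are morphisms $\iota\colon M\to Q^{X}$ and $\pi\colon Q^{X}\to M$ with $\pi\iota=\id_M$. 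The free module carries its canonical dual basis $\{(e_x^{*},e_x)\}$, for which $\bigvee_x e_x^{*}(a)\,e_x=a$ for every $a\in Q^{X}$; transporting it by $g_x:=e_x^{*}\circ\iota\in M^{*}$ and $n_x:=\pi(e_x)\in M$ yields $\bigvee_x g_x(m)\,n_x=\pi\bigl(\bigvee_x e_x^{*}(\iota m)\,e_x\bigr)=\pi(\iota m)=m$, a dual basis for $M$. Hence $\id_M\in\operatorname{im}\beta$, and since $h=\beta\bigl(\bigvee_x g_x\tensor h(n_x)\bigr)$ for every $h\in E$ by the computation above, $\beta$ is surjective, i.e. (c) holds.

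The bilinearity and join-preservation verifications, together with the fact that $\beta$ is an $E$-$E$-homomorphism, are routine (the latter being the content of the preceding lemma). The step requiring the most care is the passage (a)$\Rightarrow$ dual basis: one must set up the canonical dual basis of the free module $Q^{X}$ for an \emph{arbitrary}, possibly infinite, index set $X$ — which works here precisely because the sup-lattice identity $a=\bigvee_x e_x^{*}(a)\,e_x$ holds with no finiteness restriction, in contrast with the ring-theoretic situation — and then check that it descends along the idempotent $\iota\pi$. The other delicate computation is $\gamma\beta=\id$, whose only nontrivial ingredient is moving the scalar $f(n_x)$ across the tensor symbol via the defining relation $\rho$ of $\tensor_Q$.
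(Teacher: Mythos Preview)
Your proof is correct and rests on the same underlying idea as the paper's --- the dual-basis characterization of projectivity together with Proposition~\ref{projchar} --- but the organization and one key step differ. The paper argues the cycle (a)$\Rightarrow$(b)$\Rightarrow$(c)$\Rightarrow$(a): inside (a)$\Rightarrow$(b) it embeds $M$ as the image of an idempotent $e$ on $Q^I$, extracts the transported dual basis $\overline{e_i^*}$, $e_ie$ to get surjectivity of $\beta$, and then proves injectivity by a coordinate computation in the ambient $Q^I$, comparing two elements written in the form $\bigvee e_i^*a_i\tensor b_je_j$. You instead decouple (a)$\Leftrightarrow$(c) from (c)$\Leftrightarrow$(b) and, for injectivity, write down an explicit two-sided inverse $\gamma(h)=\bigvee_x g_x\tensor h(n_x)$ using only the dual basis furnished by surjectivity. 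Your route is a little cleaner at that point: it avoids the ambient free-module coordinates and shows directly that \emph{any} preimage of $\id_M$ under $\beta$ already produces an inverse, so (c)$\Rightarrow$(b) is self-contained. The paper's computation, on the other hand, makes the interaction of the tensor relations with the basis of $Q^I$ more explicit. Your (c)$\Rightarrow$(a) and (a)$\Rightarrow$(c) coincide with the paper's (c)$\Rightarrow$(a) and with the surjectivity half of its (a)$\Rightarrow$(b), respectively.
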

\begin{proof}


$(a) \Rightarrow (b)$

Let $M$ be a projective module, and let us identify $M$ with its isomorphic submodule of some free module $\mm[Q^I][Q]{}$, according to Proposition \ref{projchar}.  Then there exists an endomorphism $e: \mm[Q^I][Q]{} \rightarrow \mm[Q^I][Q]{}$ such that the restriction of $e$ to the submodule $M$ is the identity, i.~e. $me=m, \forall m$. Let $\overline{e_i^*}$ be the restriction to $M$ of the composition $\mm[Q^I][Q]{} \stackrel{e}{\longrightarrow} \mm[Q^I][Q]{} \stackrel{e_i^*}{\longrightarrow} \mm[Q][Q]{}$, for each $i \in I$. It is clear that $x=\bigvee(xe_i^*)e_i$ for any element $x=\bigvee x_ie_i \in \mm[Q^I][Q]{}$, hence, applying the ``MEM-associativity'', one has $m=me=\bigvee(mee_i^*)e_ie=\bigvee(m\overline{e_i^*})(e_ie)=\bigvee m\overline{e_i^*}(e_ie)=m \bigvee\overline{e_i^*}(e_ie)$ $\forall m \in M$, with $e_ie \in M$. Therefore $\bigvee\overline{e_i^*}(e_ie)=\beta(\bigvee\overline{e_i^*}\tensor(e_ie))=1_M \in E$, and, since $\beta$ in an $(E,E)$-homomorphism, $\beta$ is a surjection.

Suppose now that
$$\beta\left(\bigvee\limits_{i,j \in I} e_i^*a_i\tensor b_je_j\right) = \beta\left(\bigvee\limits_{i,j \in I} e_i^*a_i'\tensor b_j'e_j\right), \hspace{0.5cm} a_i, a_i',b_j, b_j' \in Q.$$
Then, for any $e_i$, we have
$$\begin{array}{l}
a_i\bigvee\limits_{j \in I} b_je_j = \bigvee\limits_{j \in I} a_i b_je_j =  \bigvee\limits_{j \in I} e_ie_i^*a_i b_je_j = e_i\beta\left(\bigvee\limits_{i,j \in I} e_i^*\tensor a_i b_je_j\right) = \\ 
e_i\beta\left(\bigvee\limits_{i,j \in I} e_i^*a_i\tensor b_je_j\right) =  e_i\beta\left(\bigvee\limits_{i,j \in I} e_i^*a_i'\tensor b_j'e_j\right) = \\
= e_i\beta\left(\bigvee\limits_{i,j \in I} e_i^*\tensor a_i' b_j'e_j\right) =\bigvee\limits_{j \in I} e_ie_i^*a_i' b_j'e_j = \bigvee\limits_{j \in I} a_i' b_j'e_j = a_i'\bigvee\limits_{j \in I} b_j'e_j.
\end{array}$$
Therefore,
$$\begin{array}{l}
\bigvee\limits_{i,j \in I} e_i^*a_i\tensor b_je_j = \bigvee\limits_{i,j \in I} e_i^*\tensor a_i b_je_j = \bigvee\limits_{i\in I} e_i^*\left(a_i \bigvee_{j \in I} b_je_j\right) = \\
= \bigvee\limits_{i\in I} e_i^*\left(a_i' \bigvee_{j \in I} b_j'e_j\right) = \bigvee\limits_{i,j \in I} e_i^*\tensor a_i' b_j'e_j = \bigvee\limits_{i,j \in I} e_i^*a_i'\tensor b_j'e_j,
\end{array}$$
i.e. $\beta$ is injective. Therefore, $\beta$ is an isomorphism. 

\item $(b) \Rightarrow (c)$ is obvious. 

\item $(c) \Rightarrow (a)$ 

Suppose that there are $n_i \in M^*$ and $m_j \in M$ $i \in I$ such that $\bigvee n_im_i = \beta\left(\bigvee\limits_{i \in I} n_i\tensor m_i\right) = 1_M \in E$. 

Then, for the surjection \begin{tikzcd} 
\theta:\mm[Q^I][Q]{} \arrow[r, tail, twoheadrightarrow] & \QM
\end{tikzcd}  and the injection $\mu: \QM \longrightarrow \mm[Q^I][Q]{}$, defined by the assignments $e_i \mapsto m_i$, and $m \mapsto \bigvee mn_ie_i$, respectively, we get that $\theta \mu = 1_M$. So $M$ is a retract of a free module, therefore projective.


\end{proof}

\begin{definition}
An object $S$ of a category $\mathcal{C}$ is called a \emph{separator} if for every pair of morphisms $f,g:X \rightarrow Y \in \mathcal{C}$, if $f\circ e=g\circ e$ for every morphism $e:S\rightarrow X$, then $f=g$, or equivalently, if $f \neq g$, then there is a morphism $e:S\rightarrow X$ such that $f\circ e \neq g\circ e$.
\end {definition}

\begin{definition}
A $Q$-module $M$ is said to be a \emph{generator} for $\mc$ if the free module $Q$ is a retract of $M^I$ for some set $I$.
\end{definition}

\begin{definition}
The \emph{trace} of a $Q$-module $M$ is defined as
$$\operatorname{tr}(M) :=\left\langle\bigcup\limits_{f \in M^*}fM\right\rangle.$$
\end{definition}


\begin{proposition}\label{generchar}
Let $Q$ be a quantale and $M$ a $Q$-module. Then $\operatorname{tr}(M) = Q$ if only if $M$ is a generator for $\mc$.
\end{proposition}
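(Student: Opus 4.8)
The plan is to reduce the equality $\operatorname{tr}(M) = Q$ to a concrete ``dual basis'' condition and then to transport that condition across a retract $Q \to M^I \to Q$ in both directions, following the classical trace-ideal characterisation of generators but with finite sums replaced by arbitrary joins. First I would record the shape of the trace: since each $f \in M^*$ is left $Q$-linear, its image $f(M)$ is a left $Q$-submodule of $Q$, and the union $\bigcup_{f} f(M)$ is already closed under left multiplication (and, via the maps $fq \colon x \mapsto f(x)q$ which again lie in $M^*$, under right multiplication too). Hence $\operatorname{tr}(M)$ is merely its closure under arbitrary joins, so $\operatorname{tr}(M) = \{\bigvee_{k} f_k(m_k) \mid f_k \in M^*,\ m_k \in M\}$. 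Because $Q$ is cyclic as a left module over itself (generated by $1$), a left submodule equals $Q$ iff it contains $1$; thus the proposition reduces to the equivalence of ``$M$ is a generator'' with the existence of a family $(f_k, m_k)_{k \in K}$ satisfying $1 = \bigvee_{k} f_k(m_k)$.

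For the implication $\operatorname{tr}(M) = Q \Rightarrow M$ generator, I would take such a family, set $I = K$, and define $\rho \colon M^I \to Q$ by $\rho((x_k)_k) = \bigvee_k f_k(x_k)$ together with $\sigma \colon Q \to M^I$ by $\sigma(q) = (q m_k)_k$. Both are $Q$-module homomorphisms: they preserve arbitrary joins (computed componentwise in $M^I$, together with join-preservation of each $f_k$) and left multiplication (by left linearity of each $f_k$). The computation $\rho(\sigma(q)) = \bigvee_k f_k(q m_k) = \bigvee_k q\, f_k(m_k) = q \bigvee_k f_k(m_k) = q$ gives $\rho \circ \sigma = \id_Q$, so $Q$ is a retract of $M^I$ and $M$ is a generator.

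For the converse I would exploit that in $\mc$ (being built on sup-lattices) the product $M^I$ is simultaneously a coproduct: with projections $p_k \colon M^I \to M$ and coproduct injections $u_k \colon M \to M^I$ (inserting $\bot$ in the other coordinates) one has $p_k u_k = \id_M$ and $\bigvee_{k} u_k p_k = \id_{M^I}$, the last identity holding because every $y=(y_k)_k$ is the componentwise join $\bigvee_k u_k(y_k)$. Given a generator, I pick a retraction $\rho\colon M^I \to Q$ and section $\sigma \colon Q \to M^I$ with $\rho\sigma = \id_Q$, and put $f_k := \rho \circ u_k \in M^*$ and $m_k := (p_k \circ \sigma)(1) \in M$. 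Then, since the identity $\bigvee_k u_k p_k = \id_{M^I}$ is a pointwise join and $\rho$ preserves arbitrary joins,
$$1 = \rho(\sigma(1)) = \rho\!\left(\bigvee_k u_k\bigl(p_k(\sigma(1))\bigr)\right) = \bigvee_k \rho\bigl(u_k(m_k)\bigr) = \bigvee_k f_k(m_k),$$
so $1 \in \operatorname{tr}(M)$ and therefore $\operatorname{tr}(M) = Q$.

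The routine parts, namely checking that $\rho$ and $\sigma$ are morphisms, are immediate. The step needing the most care is the categorical input used in the converse: that $M^I$ is a biproduct with $\bigvee_k u_k p_k = \id_{M^I}$ and that composition in $\mc$ distributes over arbitrary joins of morphisms (equivalently, joins of morphisms are pointwise), which is exactly what lets me pull $\rho$ through the join and plays the role of the finite-sum rearrangement in the ring case. I would therefore isolate and justify these two facts at the outset; once they are in place, both implications are short.
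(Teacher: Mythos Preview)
Your proof is correct and follows the same overall plan as the paper's: build a retraction $M^I \twoheadrightarrow Q$ from the $f_k$'s for the forward direction, and for the converse compose the given retraction with the coproduct injections $M \hookrightarrow M^I$ to land in $M^*$. The only noteworthy difference is in the forward half: you construct the section explicitly as $\sigma(q) = (qm_k)_k$ from a chosen family with $\bigvee_k f_k(m_k) = 1$, whereas the paper takes $I = M^*$, forms the canonical surjection $g\colon M^{M^*} \to Q$, and produces the section as $q \mapsto q \cdot g_*(1)$ using the residual $g_*$ of $g$ (surjectivity of $g$ forces $g(g_*(1)) = 1$); your version is the direct ring-theoretic translation, the paper's is the sup-lattice idiom, and they amount to the same thing.
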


\begin{proof}

\begin{enumerate}
 
\item[($\Rightarrow$)]

Let $\operatorname{tr}(M) = Q$, i.e., $Q=\left\langle  f M: f \in M^* \right\rangle$, and let $g: M^{M^*} \to Q$ be the morphism which makes, for each $f \in M^*$, the following diagram commute:
$$\begin{tikzcd}
{M^{M^*}} \arrow[rrd, "g"]                                  &  &    \\
                                                     &  & {Q} \\
{M} \arrow[rru, "f"'] \arrow[uu, "\alpha_f", hook'] &  &   
\end{tikzcd}.$$
Note that $g((x_i)_{i\in I}) = \bigvee_{i\in I}g_i(x_i)$, for all $(x_i)_{i \in I} \in M^I$.

Since $\operatorname{tr}(M) = Q$, $\forall q \in Q$, $\exists \{x_i\}_{i\in I} \in M^I$ and $\exists \{q_i\}_{i\in I} \in Q^I$ such that $q_ig_i(x_i)=q$. Furthermore, $\forall q \in Q$, $\exists S \subseteq \bigcup\limits_{i\in I} g_i M$ and $\exists \{q_x\}_{x\in S} \subseteq Q$ such that $\bigvee\limits_{x\in S}q_xx=q$. We also have that $\forall x \in S$, $\exists y_x \in M$ such that $x=g_i(y_x)$.

So $q=\bigvee\limits_{x\in S}q_xx=\bigvee\limits_{x\in S}q_xg_i(y_x)=\bigvee\limits_{x\in S}q_xg\alpha_i(y_x)= g\left(\bigvee\limits_{x\in S}q_x\alpha_i(y_x)\right)$. Therefore $g$ is a surjection.

Let $f:q\in Q \mapsto q.g_*(1) \in M^I$. Thus, $\forall q\in Q, \ gf(q)=qgf(1)=qgg_*(1)=q.1=q$, i. e., $f$ is a right inverse for $g$ and, therefore, $Q$ is a retract of $M^I$.
\item[($\Leftarrow$)] On the other hand, suppose $Q$ is a retract of $M^I$, with \begin{tikzcd}
{M^I} \arrow[r, "\beta"', two heads, shift left=-1] & {Q} \arrow[l, "\alpha"', hook', shift right=1]
\end{tikzcd} being the retraction. We already know that $\operatorname{tr}(M) \leq Q$. We have that, with $\beta_i=\beta \circ \mu_i$, for each $i \in I$, the following diagram commutes: 
$$\begin{tikzcd}
{M^{I}} \arrow[rrd, "\beta", two heads]                                  &  &    \\
                                                     &  & {Q} \\
{M} \arrow[rru, "\beta_i"'] \arrow[uu, "\mu_i", hook'] &  &   
\end{tikzcd}.$$

So $\forall (x_i)_{i \in I}, \, \beta ((x_i)_{i \in I}) = \bigvee \limits_{i \in I} \beta_i (x_i)$, and we have that $\beta$ is a surjection. Since $\bigcup\limits_{i \in I}\beta_i[M] \subseteq \bigcup\limits_{\alpha \in M^*}\alpha[M]$, then $Q=\left\langle\bigcup\limits_{i \in I}\beta_i[M]\right\rangle \subseteq \operatorname{tr}(M)$. 
Therefore $\operatorname{tr}(M) = Q$.
\end{enumerate}
\end{proof}

In the case of rings, the definitions of generator and separator are equivalent. For the case of quantales we have one of the implications, given by the following proposition.
\begin{proposition}
Let $Q$ be a quantale and $\mm{}$ a $Q$-module. If $\mm{}$ is a generator for $\mc$, then it is a separator.
\end{proposition}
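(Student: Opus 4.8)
The plan is to exploit two facts: that the free module $Q$ (on one generator) is itself a separator for $\mc$, by representability; and that the separator property can be transferred along the retraction provided by the generator hypothesis. The basic observation is that a left $Q$-module homomorphism $h \colon Q \to X$ is completely determined by $h(1)$, since $h(q) = h(q\cdot 1) = q\cdot h(1)$, while conversely, for every $x \in X$, the map $e_x \colon q \in Q \mapsto q\cdot x \in X$ is a $Q$-module homomorphism (it preserves joins and the action by distributivity and associativity) with $e_x(1) = x$. Hence $\mc(Q,X) \cong X$.

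First I would check that $Q$ is a separator. Given $f,g \colon X \to Y$ with $f \neq g$, I would pick $x \in X$ with $f(x) \neq g(x)$ and consider $e_x \colon Q \to X$; then $(f\circ e_x)(1) = f(x) \neq g(x) = (g\circ e_x)(1)$, so $f\circ e_x \neq g\circ e_x$, as required.

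Next I would bring in the hypothesis that $M$ is a generator, so that $Q$ is a retract of $M^I$ for some set $I$, say via $\pi \colon M^I \to Q$ and $\iota \colon Q \to M^I$ with $\pi\circ\iota = \id_Q$. Keeping $e_x$ as above, set $h := e_x\circ\pi \colon M^I \to X$. Were $f\circ h = g\circ h$, precomposition with $\iota$ would give $f\circ e_x = g\circ e_x$, a contradiction; hence $f\circ h \neq g\circ h$.

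Finally I would descend from $M^I$ to $M$ using the coproduct structure already exploited in the proof of Proposition~\ref{generchar}: with the canonical injections $\mu_i \colon M \to M^I$, every tuple decomposes as $(x_i)_{i\in I} = \bigvee_{i\in I}\mu_i(x_i)$, so any join-preserving map satisfies $(f\circ h)\bigl((x_i)_i\bigr) = \bigvee_{i\in I}(f\circ h\circ\mu_i)(x_i)$, and likewise for $g$. Since $f\circ h \neq g\circ h$, the families $\{f\circ h\circ\mu_i\}_{i\in I}$ and $\{g\circ h\circ\mu_i\}_{i\in I}$ cannot agree index by index, so there is some $i$ with $f\circ(h\circ\mu_i) \neq g\circ(h\circ\mu_i)$; taking $e := h\circ\mu_i \colon M \to X$ then witnesses $M$ as a separator. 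I expect this last descent to be the only delicate point: one must confirm that an inequality between morphisms out of the product $M^I$ is detected on at least one injection $\mu_i$, which relies precisely on preservation of arbitrary joins together with the decomposition $(x_i)_i = \bigvee_i\mu_i(x_i)$.
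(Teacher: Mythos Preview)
Your argument is correct and follows essentially the same route as the paper's proof: both first show that $Q$ separates $f$ and $g$ via the map $q \mapsto qy$ (your $e_x$, the paper's $h$), then transport this along the retraction to get a separating map out of $M^I$, and finally descend to $M$ using the decomposition $(x_i)_{i\in I} = \bigvee_{i\in I}\mu_i(x_i)$ and join-preservation. The only cosmetic difference is that the paper verifies $fh\beta \neq gh\beta$ by direct evaluation at a preimage of $1$, whereas you obtain it by precomposing with the section $\iota$; both are equally valid.
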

\begin{proof}
Since $M$ is a generator, by Proposition \ref{generchar}, $Q$ is a retract of $M^I$, for some set $I$. Let
\begin{tikzcd}
{M^I} \arrow[r, "\beta"', two heads, shift left=-1] & {Q} \arrow[l, "\alpha"', hook', shift right=1]
\end{tikzcd}
be the retraction, and consider the canonical embedding $\alpha_i: M \hookrightarrow M^I$. Let $N$ and $P$ be left $Q$-modules, and let $f$ and $g$ be different morphisms from $N$ to $P$. Then there exists $y \in N$ such that $f(y)\neq g(y)$; let us define the $Q$-module morphism $h: q \in Q \mapsto qy \in N$. Note that $fh\neq gh: Q \rightarrow P$, since $fh(1)=f(y)\neq g(y)=gh(1)$. Hence, $fh\beta ((x_i)_{i \in I})=fh(1)=f(y)\neq g(y)=gh(1)=gh\beta ((x_i)_{i \in I})$ for every family $(x_i)_{i\in I} \in \beta^{-1}(1)$, and therefore $fh\beta \neq gh\beta: M^I\rightarrow P$.

On the other hand, for all $(x_i)_{i \in I} \in M^I$, $(x_i)_{i \in I} = \bigvee_{i \in I} \alpha_i(x_i)$, so
$$\bigvee_{i\in I}fh\beta \alpha_i(x_i)\neq \bigvee_{i\in I}gh\beta \alpha_i(x_i),$$
which implies that there exists $i \in I$ such that $fh\beta \alpha_i(x_i)\neq gh\beta \alpha_i(x_i)$. Then, $fh\beta \alpha_i \neq gh\beta \alpha_i: M \rightarrow P$.
\end{proof}

\begin{proposition}\label{alphaiso}
A left $Q$-module $\mm{}$ is a generator for $\mc$ iff the $(Q,Q)$-homomorphism $\alpha: M \tensor_E M^* \rightarrow Q$ is a surjection. Moreover, if $\alpha$ is a surjection, then it is an isomorphism.
\end{proposition}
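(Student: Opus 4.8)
The plan is to split the argument into the stated equivalence and the ``moreover'' clause, treating the latter as the substantial part.

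For the equivalence, I would first identify the image of $\alpha$ with the trace of $M$. Since $\alpha$ preserves arbitrary joins and the simple tensors $m \tensor f$ generate $M \tensor_E M^*$ as a sup-lattice, every element of the image has the form $\bigvee_i \alpha(m_i \tensor f_i) = \bigvee_i f_i(m_i)$. This collection is closed under joins and, because each $f_i$ is left $Q$-linear, under the left $Q$-action, since $q \cdot \bigvee_i f_i(m_i) = \bigvee_i f_i(q m_i)$; it contains every $f(m)$ and is contained in $\operatorname{tr}(M)$, so it coincides with $\langle \bigcup_{f \in M^*} fM \rangle = \operatorname{tr}(M)$. Hence $\alpha$ is a surjection iff $\operatorname{tr}(M) = Q$, which by Proposition~\ref{generchar} holds iff $M$ is a generator for $\mc$.

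For the ``moreover'' clause, suppose $\alpha$ is a surjection. Then $1$ lies in $\operatorname{tr}(M)$, which is the image of $\alpha$, so there is a ``dual basis'' element $u = \bigvee_{i \in I} m_i \tensor f_i \in M \tensor_E M^*$ with $\alpha(u) = \bigvee_{i} f_i(m_i) = 1$. I define $\gamma : Q \to M \tensor_E M^*$ by $\gamma(q) = q \cdot u = \bigvee_i (q m_i) \tensor f_i$, which preserves joins because the $Q$-action does. As $\alpha$ is left $Q$-linear, $\alpha(\gamma(q)) = q \cdot \alpha(u) = q \cdot 1 = q$, so $\alpha \gamma = \id_Q$, recovering surjectivity. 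The crux is to prove $\gamma \alpha = \id_{M \tensor_E M^*}$, which gives injectivity. Since both maps preserve joins, it suffices to check equality on a simple tensor $n \tensor g$, where $\gamma\alpha(n \tensor g) = \gamma(g(n)) = \bigvee_i (g(n)\, m_i) \tensor f_i$. The key observation is that the endomorphism $g m_i = \beta(g \tensor m_i) \in E$ acts by $(g m_i)(x) = g(x) m_i$, so $g(n) m_i = n(g m_i)$; moving this endomorphism across the $E$-balanced tensor yields $(n(g m_i)) \tensor f_i = n \tensor ((g m_i) f_i)$, with $(g m_i) f_i = f_i \circ (g m_i)$. Evaluating, $((g m_i) f_i)(x) = f_i(g(x) m_i) = g(x)\, f_i(m_i)$ by left $Q$-linearity of $f_i$, that is $(g m_i) f_i = g \cdot f_i(m_i)$ in $M^*$. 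Therefore $\gamma\alpha(n \tensor g) = \bigvee_i n \tensor (g \cdot f_i(m_i)) = n \tensor \bigl(g \cdot \bigvee_i f_i(m_i)\bigr) = n \tensor (g \cdot 1) = n \tensor g$, using that both the tensor and the right $Q$-action distribute over joins together with $\bigvee_i f_i(m_i) = 1$.

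Consequently $\gamma$ is a two-sided inverse of $\alpha$, so $\alpha$ is a bijective $(Q,Q)$-homomorphism; the set-theoretic inverse of a bijective module homomorphism is again a homomorphism, hence $\alpha$ is an isomorphism. I expect the balancing computation of the previous paragraph, namely correctly tracking the right $E$-action through the tensor and invoking the $Q$-linearity of the $f_i$, to be the main obstacle, since everything else reduces to the already-established identification of the image of $\alpha$ with $\operatorname{tr}(M)$ and to Proposition~\ref{generchar}.
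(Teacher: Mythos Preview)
Your proof is correct and follows essentially the same route as the paper: both arguments hinge on choosing a ``dual basis'' element with $\bigvee_i f_i(m_i)=1$ and then pushing it through the tensor using the $MNM$- and $NMN$-associativities. The only cosmetic difference is that you package the computation as an explicit two-sided inverse $\gamma(q)=q\cdot u$ via the left $Q$-action, whereas the paper argues injectivity directly by rewriting an arbitrary element as $\bigvee_k x_k''\tensor f_k''\cdot\alpha(-)$ via the right $Q$-action; the underlying manipulation is the same.
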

\begin{proof}
The first affirmation follows from the definition of $\alpha$ and the Proposition \ref{generchar}.  
For the second statement, suppose that $\alpha$ is a surjection, and let us show that it is an injection too. So, let 
$$\bigvee_{i \in I}f_i(x_i) = \alpha \left(\bigvee_{i \in I}x_i\tensor f_i\right) = \alpha \left(\bigvee_{j \in j}x_j'\tensor f_j'\right) = \bigvee_{j \in J}f_j'(x_j'),$$
and
$$\alpha \left(\bigvee_{k \in K}x_k''\tensor f_k''\right) = \bigvee_{k \in K}f_k(x_k) = \id_Q,$$
for some sets $I$, $J$ and $K$. 

So, using the $MNM$-associativity and the $NMN$-associativity, we have
$$\begin{array}{l}
\bigvee\limits_{i \in I}x_i\tensor f_i = \bigvee\limits_{i \in I}\left(\bigvee\limits_{k \in K}(x''_kf''_k)x_i\right)\tensor f_i = \bigvee\limits_{k \in K}\left(\bigvee\limits_{i \in I}(x''_kf''_k)x_i\tensor f_i\right) = \\
= \bigvee\limits_{k \in K}\left(\bigvee\limits_{i \in I}x''_k(f''_kx_i)\tensor f_i\right) = \bigvee\limits_{k \in K}\left(\bigvee\limits_{i \in I}x''_k\tensor (f''_kx_i)f_i\right) = \\
= \bigvee\limits_{k \in K}\left(\bigvee\limits_{i \in I}x''_k\tensor f''_k\cdot(x_if_i)\right) = \bigvee\limits_{k \in K}\left(x''_k\tensor f''_k\bigvee\limits_{i \in I}x_if_i\right) = \\
= \bigvee\limits_{k \in K}\left(x''_k\tensor f''_k\bigvee\limits_{j \in J}x'_jf'_j\right) = \bigvee\limits_{k \in K}\left(\bigvee\limits_{j \in J}x''_k\tensor f''_k\cdot(x'_jf'_j)\right) = \\
=\bigvee\limits_{k \in K}\left(\bigvee\limits_{j \in J}x''_k\tensor (f''_kx'_j)f'_j\right) = \bigvee\limits_{k \in K}\left(\bigvee\limits_{j \in J}x''_k(f''_kx'_j)\tensor f'_j)\right) = 
\\ = \bigvee\limits_{k \in K}\left(\bigvee\limits_{j \in J}(x''_kf''_k)x'_j\tensor f'_j\right) = \bigvee\limits_{j \in J}\left(\bigvee\limits_{k \in K}\left((x''_kf''_k)x'_j\right)\tensor f'_j\right) = \\
= \bigvee\limits_{j \in J}x'_j\tensor f'_j.
\end{array}$$
\end{proof}

\begin{definition}
A $Q$-module $\QM$ is said to be a \emph{progenerator} for the category $\mc$ if it is a projective generator.
\end{definition}

Joining the Propositions \ref{betaiso} and \ref{alphaiso}, we get the following important result:

\begin{theorem}\label{progiso}
A $Q$-module $\QM$ is a progenerator iff the homomorphisms $\alpha: M \tensor_E M^* \rightarrow Q$ and $\beta: M^* \tensor_Q M \rightarrow E$ are bimodule isomorphisms. 
\end{theorem}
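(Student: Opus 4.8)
The plan is to treat this essentially as a corollary that bundles together the two characterizations already established, since by the definition immediately preceding the statement a progenerator is nothing but a module that is simultaneously projective and a generator. So I would unwind the word ``progenerator'' into its two defining conditions and then substitute each one by its tensor-homomorphism reformulation.

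Concretely, the first step is to invoke Proposition \ref{betaiso}, which asserts the equivalence of (a) $M$ projective, (b) $\beta$ an isomorphism, and (c) $\beta$ a surjection; from this I extract exactly what I need, namely that $M$ is projective if and only if $\beta \colon M^* \tensor_Q M \rightarrow E$ is an isomorphism. The second step is to invoke Proposition \ref{alphaiso}: $M$ is a generator for $\mc$ if and only if $\alpha \colon M \tensor_E M^* \rightarrow Q$ is a surjection, together with the second assertion of that proposition that a surjective $\alpha$ is automatically an isomorphism. Combining these two clauses gives that $M$ is a generator if and only if $\alpha$ is an isomorphism. The fact that $\alpha$ and $\beta$ are genuine $(Q,Q)$- and $(E,E)$-bimodule homomorphisms, respectively, is not something to be reproved here, as it was already established in the lemma constructing $\alpha$ and $\beta$.

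The final step is to conjoin the two equivalences: $M$ is a progenerator, i.e.\ projective \emph{and} a generator, precisely when $\beta$ is an isomorphism \emph{and} $\alpha$ is an isomorphism. This yields the stated biconditional directly.

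I do not expect a genuine obstacle in this argument, since all the substantive work has been carried out in Propositions \ref{betaiso} and \ref{alphaiso}. The only point requiring a little care is the ``surjection versus isomorphism'' mismatch between the two propositions: Proposition \ref{alphaiso} phrases the generator condition in terms of surjectivity of $\alpha$, whereas the theorem asks for $\alpha$ to be an isomorphism. This is harmless because the same proposition supplies the upgrade from surjection to isomorphism for $\alpha$, so both characterizations can be stated uniformly in terms of isomorphisms before being combined.
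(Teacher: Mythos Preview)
Your proposal is correct and matches the paper's approach exactly: the paper presents Theorem \ref{progiso} as an immediate consequence of combining Propositions \ref{betaiso} and \ref{alphaiso}, without a separate proof. Your handling of the surjection-versus-isomorphism point for $\alpha$ is precisely the small observation needed.
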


\begin{proposition}\label{prog}
Let $\mm{}$ be a progenerator for $\mc$. Then:
\begin{enumerate}[(i)]
\item $N = M^* := \mc(\mm{},\mm[Q]{}) \cong \MC[E](M_E,E_E)$ as $E$-$Q$-bimodules.
\item $M \cong \mc[E]({}_{E}N,{}_{E}E)$ as $Q$-$E$-bimodules.
\item $Q \cong \End(M_E) \cong \End({}_{E}N)$ as quantales.
\item $M \cong \MC(N_Q,Q_Q)$ as $Q$-$E$-modules.
\item $E \cong \End(N_Q)$ as quantales.
\end{enumerate}   
\end{proposition}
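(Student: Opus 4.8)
The plan is to reduce everything to the invertibility of the two pairing maps and then exploit the symmetry of the resulting Morita context. By Theorem \ref{progiso}, since $\QM$ is a progenerator, both $\alpha\colon M\tensor_E M^*\to Q$ and $\beta\colon M^*\tensor_Q M\to E$ are bimodule isomorphisms. Surjectivity of $\alpha$ and of $\beta$ yields two ``dual-basis'' identities: families $(m_j,f_j)\in M\times N$ with $\bigvee_j f_j(m_j)=1$ in $Q$, and $(g_k,p_k)\in N\times M$ with $\bigvee_k g_kp_k=\id_M=1_E$ in $E$. The whole datum $(Q,E,M,N,\alpha,\beta)$ is symmetric under the interchange $(Q,M,\alpha)\leftrightarrow(E,N,\beta)$, so every assertion deduced from these identities has a mirror assertion that holds by the same argument; this will give (v) as the mirror of (iii) and (iv) as the mirror of (i), leaving me to prove directly only (iii), (i) and (ii).

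For (iii) I would consider the two multiplication maps $\lambda\colon Q\to\End(M_E)$, $q\mapsto(m\mapsto qm)$, and $\rho\colon Q\to\End({}_EN)$, $q\mapsto(f\mapsto fq)$. The bimodule axioms make $\lambda_q$ right $E$-linear and $\rho_q$ left $E$-linear, and a short check (watching the composition convention so that these come out as homomorphisms and not anti-homomorphisms) shows both are morphisms of quantales. Injectivity follows by cancellation: if $qm=q'm$ for all $m$ then, applying each $f_j$ and using that $f_j$ is left $Q$-linear, $q=q\bigvee_jf_j(m_j)=\bigvee_jf_j(qm_j)=\bigvee_jf_j(q'm_j)=q'$. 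For surjectivity, given $\phi\in\End(M_E)$ I would set $q=\bigvee_jf_j(\phi(m_j))$ and verify $\lambda_q=\phi$ using the associativity identity $\phi(f(m)m_0)=f(\phi(m))m_0$, which holds for every right $E$-linear $\phi$ because $fm_0\in E$; indeed $qm=\bigvee_j\phi(f_j(m_j)m)=\phi\bigl((\bigvee_jf_j(m_j))m\bigr)=\phi(m)$. The map $\rho$ is handled by the same computation on the other side, giving the second isomorphism $Q\cong\End({}_EN)$ of (iii), and applying the whole argument to the mirror context produces (v), $E\cong\End(N_Q)$.

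For (i) and (ii) I would use the two adjoints of the pairing $(f,m)\mapsto fm\in E$ furnished by $\beta$, namely $\Phi\colon N\to\MC[E](M_E,E_E)$, $f\mapsto(m\mapsto fm)$, and $\Theta\colon M\to\mc[E]({}_EN,{}_EE)$, $m\mapsto(f\mapsto fm)$. A direct computation shows $\Phi(f)$ is right $E$-linear and $\Theta(m)$ is left $E$-linear, and that $\Phi$ and $\Theta$ respect the relevant bimodule structures (those recalled in the opening proposition of Section \ref{tensor}). Injectivity again comes from the trace identity $\bigvee_jf_j(m_j)=1$. Surjectivity is where I would lean on (iii): given a right $E$-linear $\Psi\colon M\to E$, each assignment $m\mapsto\Psi(m)(m')$, for fixed $m'$, is again right $E$-linear, hence equals $\lambda_q$ for a unique $q\in Q$ by (iii); the resulting $m'\mapsto q$ is left $Q$-linear and join preserving, so it defines an $f\in M^*=N$ with $\Phi(f)=\Psi$, and $\Theta$ is treated in exactly the same way to establish (ii). Statement (iv) is then the mirror of (i) under the symmetry above.

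I expect the main obstacle to be twofold. First, bookkeeping the composition conventions in the several endomorphism quantales ($E$, $\End(M_E)$, $\End({}_EN)$, $\End(N_Q)$) so that the multiplication maps are quantale homomorphisms rather than anti-homomorphisms, and checking at each step that the maps are simultaneously left- and right-linear for the correct actions. Second, the surjectivity arguments: for (iii) this is the manipulation with the associativity identity $\phi(f(m)m_0)=f(\phi(m))m_0$, and for (i)/(ii) it is the bootstrap that turns an abstract right $E$-linear map into a left multiplication via the double-centralizer statement (iii). Once these are in place, the remaining verifications -- that each bijection is a morphism of bimodules and that the mirror symmetry applies -- are routine.
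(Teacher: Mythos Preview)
Your proposal is correct and follows essentially the same approach as the paper: the same candidate maps, the same injectivity arguments via the trace identity $\bigvee_j f_j(m_j)=1_Q$, the same surjectivity computation for (iii), and the same appeal to the $(Q,M,\alpha)\leftrightarrow(E,N,\beta)$ symmetry for (iv) and (v). The one minor divergence is in the surjectivity of (i) and (ii): the paper writes down a direct preimage---for $\Psi\in\MC[E](M_E,E_E)$ one checks $\Psi=\Phi\bigl(\bigvee_j \Psi(m_j)f_j\bigr)$ using only right $E$-linearity of $\Psi$ and the generator identity---whereas you detour through the double-centralizer statement (iii); both routes work, but the direct formula is shorter and sidesteps precisely the bookkeeping you flag as your main obstacle.
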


\begin{proof}
\begin{enumerate}[(i)]
\item By associating, to each $f \in M^*$, the $E$-homomorphism $\lambda(f) \in \MC[E](M_E,E_E)$, defined by $\lambda(f)(m) = fm \in E$, $\forall m \in M$, we get the map $\lambda: M^* \rightarrow \MC[E](M_E,E_E)$. We shall prove that $\lambda$ is an $E$-$Q$-module isomorphism.

First, we note that NME-associativity implies that $\lambda(f) \in \MC[E](M_E,E_E)$, while ENM-associativity and NMQ-associativity guarantee that $\lambda$ is an $E$-$Q$-bimodule homomorphism.

Since $M$ is a generator for $\mc$, like in Proposition \ref{alphaiso}, there exists a set $K$, $\{x_k\}_{k \in K} \subseteq M$, and $\{f_k\}_{k \in K} \subseteq N$ such that $\bigvee_{k \in K}x_kf_k = \id_Q$. So, suppose $\lambda(f)(m) = fm = f'm = \lambda(f')(m)$ for some $f, f' \in Q$ and all $m \in M$. Then, using the NMN-associativity, we have 
$$\begin{array}{l} f = f\id_Q = f\bigvee\limits_{k \in K}x_kf_k = \bigvee\limits_{k \in K}f\cdot (x_kf_k) = \bigvee\limits_{k \in K}(fx_k)f_k =  \\ 
= \bigvee\limits_{k \in K}(f'x_k)f_k= \bigvee\limits_{k \in K}f'\cdot(x_kf_k) = f'\bigvee\limits_{k \in K}x_kf_k = f',
\end{array}$$
which proves that $\lambda$ is injective. In order to show that it is also surjective, let us consider a homomorphism $f \in \MC[E](M_E,E_E)$ and an $m \in M$. Using the MNM-associativity and the ENM-associativity, we have
$$\begin{array}{l}
f(m) = f\left(\left(\bigvee\limits_{k \in K}x_kf_k\right)m\right) = f\left(\bigvee\limits_{k \in K}(x_kf_k)m\right) = f\left(\bigvee\limits_{k \in K}x_k(f_km)\right) =\\ 
= \bigvee\limits_{k \in K}f(x_k)\cdot(f_km) = \bigvee\limits_{k \in K}(f(x_k)f_k)m= \lambda\left(\bigvee\limits_{k \in K}(f(x_k)f_k)\right)(m).
\end{array}$$

\item It can be proved in a similar way to the previous one.
\item Consider the quantale homomorphisms $\sigma: Q \rightarrow \End(M_E)$ and $\tau: Q \rightarrow \End({}_{E}N)$, defined by $\sigma(q)(m) = qm$ and $\tau(q)(f) = fq$ for any $q \in Q$, $m \in M$ and $f \in N$. In the same way as in (i), we show that $\sigma$ is injective. Indeed, using the QMN-associativity, suppose that $qm = \sigma(q)(m) = \sigma(q')(m) = q'm$, for some $q, q' \in Q$, $m \in M$. Then
$$\begin{array}{l}
q = q\id_Q = q\bigvee\limits_{k \in K}x_kf_k = \bigvee\limits_{k \in K}q\cdot(x_kf_k) = \bigvee\limits_{k \in K}(qx_k)f_k = \bigvee\limits_{k \in K}(q'x_k)f_k = \\ 
= \bigvee\limits_{k \in K}q'(x_kf_k) = q'\bigvee\limits_{k \in K}x_kf_k = q'.\end{array}$$ 
Furthermore, just like in (i), we can show that $\sigma\left(\bigvee_{k \in K}f(x_k)f_k\right) = f$ for any $f \in \End(M_E)$, so $\sigma$ is also surjective and, therefore, an isomorphism. The proof of the fact that also $\tau$ is an isomorphism is completely analogous.
\end{enumerate}
The proofs of (iv) and (v) are totally similar to ones of (i) and (iii), respectively, using only the fact that by Proposition \ref{betaiso} there exist a finite set $K$, $\{x_k\}_{k \in K} \subseteq M$, and $\{f_k\}_{k \in K} \subseteq N$, such that $\bigvee_{k \in K}f_kx_k = \id_R$.
\end{proof}

\begin{corollary}\label{corprog}
Let $\mm{}$ be a progenerator for $\mc$. Then $M_E$, $N_Q$ and ${}_{E}N$ are also progenerators for $\MC[E]$, $\MC[Q]$ and $\mc[E]$, respectively, and $\alpha$ and $\beta$ are isomorphisms.
\end{corollary}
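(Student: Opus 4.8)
The plan is to deduce every assertion from Theorem \ref{progiso} together with the explicit identifications recorded in Proposition \ref{prog}. Since $\mm{}$ is a progenerator, Theorem \ref{progiso} tells us that $\alpha: M \tensor_E M^* \to Q$ and $\beta: M^* \tensor_Q M \to E$ are bimodule isomorphisms, which already settles the final clause. It then remains to verify, for each of $M_E$, $N_Q$ and ${}_E N$, that the two evaluation maps attached to it by the construction preceding Proposition \ref{betaiso} are isomorphisms; the appropriate one-sided version of Theorem \ref{progiso} will then give that the module in question is a progenerator.

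I will treat $M_E$ in full and indicate that the remaining cases follow the same recipe. Regarding $M$ as a right $E$-module, its coefficient quantale is $E$, its endomorphism quantale is $\End(M_E) \cong Q$ by Proposition \ref{prog}(iii), and its dual $\MC[E](M_E,E_E)$ is isomorphic to $N = M^*$ by Proposition \ref{prog}(i). Under these identifications the generator map of $M_E$ -- the evaluation into the coefficient quantale $E$ given by the pairing $(m,f) \mapsto fm$ -- coincides, up to the interchange of tensor factors, with $\beta$; in particular it has image $\langle\{fm \mid f \in N,\, m \in M\}\rangle = E$, so it is surjective and $M_E$ is a generator for $\MC[E]$. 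Likewise the projective map of $M_E$, which lands in $\End(M_E) \cong Q$, is carried by the isomorphism $\sigma$ of Proposition \ref{prog}(iii) to the pairing $(m,f) \mapsto mf$, that is, to $\alpha$ (again up to the interchange of tensor factors); the key point of the verification is the MNM-associativity $m(fm') = (mf)m'$ already used in Proposition \ref{alphaiso}. Since $\alpha$ and $\beta$ are isomorphisms, both maps are isomorphisms, and $M_E$ is a progenerator.

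For ${}_E N$ one argues identically, using Proposition \ref{prog}(iii) to identify $\End({}_E N) \cong Q$ and Proposition \ref{prog}(ii) to identify $\mc[E]({}_E N,{}_E E) \cong M$; the generator and projective maps again become $\beta$ and $\alpha$, hence isomorphisms, so ${}_E N$ is a progenerator for $\mc[E]$. For $N_Q$ one uses Proposition \ref{prog}(v) to identify $\End(N_Q) \cong E$ and Proposition \ref{prog}(iv) to identify $\MC(N_Q,Q_Q) \cong M$; now the generator map is $\alpha$ and the projective map is $\beta$, both isomorphisms, so $N_Q$ is a progenerator for $\MC$.

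The only delicate point is the bookkeeping: for each of the three modules one must correctly single out which quantale plays the role of the coefficient quantale and which of the endomorphism quantale, and on which side each acts, so that the two evaluation pairings are matched with the correct one of $\alpha$, $\beta$. Once this is arranged, each identification is an immediate consequence of the associativity relations (MNM-, NMN-, ENM-, $\ldots$) collected before Proposition \ref{betaiso}, together with the isomorphisms of Proposition \ref{prog}; no computation beyond those already performed is required.
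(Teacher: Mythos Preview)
Your proposal is correct and follows essentially the same approach as the paper: identify the dual and endomorphism quantale of each module via Proposition \ref{prog}, observe that the resulting evaluation maps coincide (up to the obvious swap of factors) with the original $\alpha$ and $\beta$, which are isomorphisms by Theorem \ref{progiso}, and conclude. The paper phrases the final step via Propositions \ref{betaiso} and \ref{alphaiso} rather than Theorem \ref{progiso} and spells out the case ${}_E N$ instead of $M_E$, but the argument is the same.
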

\begin{proof}
For ${}_{E}N$, by (ii) and (iii) of the previous proposition, we have $M \cong \mc[E]({}_{E}N,{}_{E}E)$ and $Q \cong \End({}_{E}N)$. Therefore, considering that $\alpha$ and $\beta$ are surjections and applying Propositions \ref{betaiso} and \ref{alphaiso} to the module ${}_{E}N$, it follows that ${}_{E}N$ is a progenerator for $\mc[E]$. The proofs for the other cases are analogous.
\end{proof}

\section{Morita Equivalence}
\label{morita}

To present the concept of Morita equivalence we need to recall some general categorical notions in the setting of module categories over quantales. A pair of homomorphisms $\alpha,\beta: A \rightarrow B$ in a category of right Q-module $\MC$ is called a \textit{kernel pair} of a homomorphism $\gamma : B \rightarrow C$ in $\MC$, $(\alpha,\beta) = \operatorname{kerp}(\gamma)$, if the commutative square 

\begin{equation}\label{absinterdiag}
\xymatrix{
A \ar[r]^{\alpha} \ar[d]_{\beta}  & B \ar[d]^\gamma
&\\B \ar[r]_{\gamma}  & C
\\
}
\end{equation}
is a pullback. An \textit{equalizer} $(E,\mu)$ of the kernel pair $(\alpha,\beta)$ is a morphism $\mu : E \rightarrow A$ such that (i) $\alpha \mu = \beta \mu$, and (ii) for every homomorphism $\nu : Y \rightarrow A$ with $\alpha\nu = \beta\nu$, there exists exactly one homomorphism $\omega : Y \rightarrow E$ such that $\nu=\mu\omega$.

A homomorphism $\gamma : B \rightarrow C$ is called a \textit{coequalizer} of a pair of morphisms $\alpha,\beta: A \rightarrow B$ in $\MC$, $\gamma = \operatorname{coeq}(\alpha,\beta)$, if $\gamma\alpha = \gamma\beta$, and for every homomorphism $\eta : B \rightarrow D$ with $\eta\alpha = \eta\beta$, there exists exactly one homomorphism $\delta : C \rightarrow D$ such that $\eta=\delta\gamma$. A diagram $\begin{tikzcd}
{A}  \arrow[r, "\alpha", shift right=-1] \arrow[r, "\beta"', shift right=1] & {B} \stackrel{\gamma}{\longrightarrow} C
\end{tikzcd}$ is called \textit{left exact} (respectively, \textit{right exact}) if $(\alpha,\beta) = \operatorname{kerp}(\gamma)$ (resp., $\gamma = \operatorname{coeq}(\alpha,\beta)$), and \textit{exact} if it is left and right exact simultaneously. A functor $F : \MC \rightarrow \MC[R]$ between the module categories $\MC$ and $\MC[R]$ is said to be \textit{left continuous} (\textit{exact}), \textit{right continuous} (\textit{exact}), and \textit{continuous} (\textit{exact}) if it preserves left exact, right exact, and exact diagrams, respectively.


The following two characterizations are, respectively, 7.4.2 and 8.4.2 of \cite{schu}.
 
\begin{proposition}\label{comp}\label{cocomp}
A category $\cat C$ is complete if and only if it possesses equalizers and products. It is cocomplete if and only if it has coequalizers and coproducts.
\end{proposition}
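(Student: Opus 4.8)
The plan is to treat the completeness statement directly and deduce cocompleteness by dualizing. The ``only if'' direction is immediate: products are exactly limits of diagrams whose shape is a discrete category, while an equalizer is the limit of a diagram whose shape is the category with two objects and a pair of parallel arrows between them. Hence any complete category automatically has both products and equalizers.

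For the ``if'' direction, suppose $\cat C$ has all small products and all equalizers, and let $F : \cat J \to \cat C$ be a small diagram. First I would form two products: $P := \prod_{X \in \obj \cat J} F(X)$, with projections $p_X : P \to F(X)$, and $P' := \prod_{(f : X \to Y) \in \Mor \cat J} F(\cod f)$, with projections $q_f : P' \to F(\cod f)$, the second being indexed by the \emph{morphisms} of $\cat J$. Using the universal property of $P'$, I would then define two morphisms $s, t : P \to P'$ by prescribing their components: $q_f \circ s := p_{\cod f}$ and $q_f \circ t := F(f) \circ p_{\dom f}$, for every morphism $f$ of $\cat J$. Finally I would take an equalizer $\mu : L \to P$ of the pair $(s,t)$ and set $\phi_X := p_X \circ \mu : L \to F(X)$.

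The heart of the argument, and the step I expect to be the main obstacle, is verifying that $(L, (\phi_X)_X)$ is a limit of $F$. The equalizer identity $s \mu = t \mu$ unwinds, component by component through the $q_f$, to $p_{\cod f} \circ \mu = F(f) \circ p_{\dom f} \circ \mu$ for every $f : X \to Y$, i.e. $\phi_Y = F(f) \circ \phi_X$; this is exactly the cone condition, so $(L, (\phi_X)_X)$ is a cone to $F$. Conversely, given any cone $(N, (\psi_X)_X)$, the family $(\psi_X)_X$ induces by the universal property of $P$ a unique $v : N \to P$ with $p_X \circ v = \psi_X$, and the cone condition on $\psi$ translates (again componentwise) into $s \circ v = t \circ v$; thus $v$ factors uniquely through the equalizer as $v = \mu \circ u$ for a unique $u : N \to L$, and one checks $\phi_X \circ u = \psi_X$. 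Uniqueness of the mediating morphism $u$ follows because $\mu$ is monic and the projections $p_X$ are jointly monic. Care is needed here to keep the two separate universal properties (of the product $P$ and of the equalizer $\mu$) from being conflated, and to discharge the uniqueness clause rigorously rather than by appeal to intuition.

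For the cocompleteness statement, I would invoke the duality already recorded above: $\cat C$ is cocomplete if and only if $\cat C^{\op}$ is complete, and coproducts and coequalizers in $\cat C$ are precisely products and equalizers in $\cat C^{\op}$. Applying the first assertion to $\cat C^{\op}$ then yields the second at once, so no further construction is required.
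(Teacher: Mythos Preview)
Your argument is correct and is in fact the standard textbook construction of limits from products and equalizers. However, there is nothing to compare: the paper does not prove this proposition at all but simply cites it as 7.4.2 and 8.4.2 of Schubert's \emph{Categories}. Your write-up is thus more detailed than what the paper provides, and it matches the classical proof one would find in the cited reference.
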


\begin{proposition}\label{coecocomp}
The category $\MC{}$ is complete and cocomplete.
\end{proposition}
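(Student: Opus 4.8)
The plan is to invoke Proposition \ref{comp}: since a category is complete precisely when it has products and equalizers, and cocomplete precisely when it has coproducts and coequalizers, it suffices to construct these four kinds of (co)limits explicitly in $\MC$. Throughout, recall that an object of $\MC$ is a sup-lattice equipped with a right $Q$-action that distributes over arbitrary joins in both arguments, and that a morphism is a join- and action-preserving map; consequently every construction will be carried out at the level of sup-lattices and then checked to be compatible with the action.

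First I would treat the complete side. For a family $\{M_i\}_{i\in I}$ in $\MC$, the product is the Cartesian product $\prod_{i\in I} M_i$ ordered componentwise -- this is again a complete lattice, with joins computed coordinatewise -- endowed with the componentwise action $(m_i)_i \cdot q = (m_i\cdot q)_i$; the projections are module morphisms, and the mediating morphism $m \mapsto (g_i(m))_i$ for a cone $\{g_i\}$ is the unique join- and action-preserving map with the required property. For the equalizer of a parallel pair $f,g\colon M \to N$, I would take $E = \{m\in M \mid f(m)=g(m)\}$ with the inclusion. The point to check is that $E$ is a submodule: it is closed under arbitrary joins of $M$ because $f(\bigvee_j m_j)=\bigvee_j f(m_j)=\bigvee_j g(m_j)=g(\bigvee_j m_j)$, it contains $\bot$, and it is closed under the action since $f(m\cdot q)=f(m)\cdot q=g(m)\cdot q=g(m\cdot q)$; its universal property is then immediate. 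By Proposition \ref{comp}, $\MC$ is complete.

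For cocompleteness, I would first observe that in $\SL$ the product doubles as a coproduct, and that this biproduct structure lifts to $\MC$: with injections $\iota_j\colon M_j \to \prod_i M_i$ sending $x$ to the tuple that equals $x$ in coordinate $j$ and $\bot$ elsewhere, any cocone $\{f_i\colon M_i\to M\}$ induces the map $(m_i)_i \mapsto \bigvee_i f_i(m_i)$, which preserves joins and (using distributivity of the action over joins) the $Q$-action, is the unique such factorization, and satisfies $f\circ\iota_j=f_j$ because $f_i(\bot)=\bot$. Finally, for the coequalizer of $f,g\colon M\to N$ I would form the quotient $N/\theta$, where $\theta$ is the least $Q$-module congruence on $N$ containing all pairs $(f(m),g(m))$; such a least congruence exists as the intersection of the nonempty family of all module congruences containing these pairs, the intersection of congruences being again a congruence. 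The quotient carries a well-defined $Q$-module structure via $\bigvee_j[x_j]=[\bigvee_j x_j]$ and $[x]\cdot q=[x\cdot q]$, the canonical surjection is a morphism coequalizing $f$ and $g$, and any $h\colon N\to D$ with $hf=hg$ factors uniquely through it because the kernel congruence of $h$ contains $\theta$. By Proposition \ref{comp}, $\MC$ is cocomplete.

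The routine steps are the product, the equalizer, and the coproduct, all of which are essentially inherited from $\SL$. The main obstacle is the coequalizer: one must verify that the least module congruence generated by a given relation really exists and, crucially, that passing to the quotient yields a genuine $Q$-module -- that is, that the join operation and the action descend to congruence classes. This is exactly where the compatibility of the congruence with \emph{arbitrary} joins (not merely finite ones) is indispensable, and it is the only place where more than a formal verification is required.
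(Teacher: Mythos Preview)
Your proposal is correct and follows essentially the same approach as the paper: both reduce the question via Proposition~\ref{comp} to the existence of (co)products and (co)equalizers, construct the equalizer as the submodule $\{m\in M\mid f(m)=g(m)\}$, and construct the coequalizer as the quotient by the module congruence generated by the pairs $(f(m),g(m))$. The only difference is that the paper outsources products and coproducts to \cite[Proposition~4.13]{rusjlc} while you spell them out directly, which is a harmless variation.
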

\begin{proof}
By \cite[Proposition 4.13]{rusjlc}, $\MC{}$ has products and coproducts, hence, by Proposition \ref{comp}, it is enough show that the category it also has equalizers and coequalizers.

So, let $f$ and $g$ be morphisms from $M$ to another module $N$ and let $A = \{x \in M: f(x)=g(x)\}$, which is easily seen to be a submodule of $M$. The diagram

\begin{tikzcd}
A \arrow[r, "e"] \arrow[r, "\subseteq"'] & M \arrow[r, phantom] \arrow[r, "g"', shift right] \arrow[r, "f", shift left] & N \\
                                           & A' \arrow[u, "e'"'] \arrow[lu, "e''", dashed]                                      &  
\end{tikzcd}
shows that the inclusion morphism $e$ is equalizer of $(f,g)$. Indeed, by the definition of $A$, we have $f e = g e$. If $e':A' \rightarrow M$ is a homomorphism such that $f e' = g e'$, then the image of $e'$ is contained in $A$, hence the same morphism $e'': y \in A' \mapsto e'(y) \in A$ is the unique homomorphism such that $e' = e e''$.

For the coequalizer, in the diagram
\begin{tikzcd}
                                                          &                                   & B'                        \\
M \arrow[r, "g"', shift right] \arrow[r, "f", shift left] & N \arrow[r, "c"] \arrow[ru, "c'"] & B \arrow[u, "d"', dashed]
\end{tikzcd}
 
let us consider the congruence $\vartheta = \langle(f(x),g(x)): x \in M\rangle$, and let $B=N/\vartheta$ and $c$ be the canonical projection. Given any homomorphism $c':N \rightarrow B'$ such that $c' f = c' g$, we get $\vartheta = \ker c \subseteq \ker c'$, hence there exists a unique homomorphism $d:B \rightarrow B'$ such that $c' = d c$. 
\end{proof}

\begin{theorem}\label{adj}
For a functor $F : \MC \rightarrow \MC[R]$ the following statements are equivalent:
\begin{enumerate}[(a)]
\item $F$ has a right adjoint;
\item $F$ is right continuous and preserves coproducts;
\item There exists (unique up to natural isomorphism) a $Q$–$R$-bimodule $M \in \mc_R$ such that the functors $-\tensor_Q M : \MC \rightarrow \MC[R]$ and $F$ are naturally isomorphic, i.e. $F \cong -\tensor_Q M$.
\end{enumerate}   
\end{theorem}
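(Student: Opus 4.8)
The plan is to establish the cycle $(a)\Rightarrow(b)\Rightarrow(c)\Rightarrow(a)$. The implication $(a)\Rightarrow(b)$ is immediate, since a functor possessing a right adjoint preserves every colimit; as coproducts and the coequalizers occurring in right exact diagrams are colimits, $F$ is right continuous and preserves coproducts. For $(c)\Rightarrow(a)$ it suffices to invoke Proposition \ref{natiso}: regarding right $Q$- and right $R$-modules as bimodules over the two-element quantale acting trivially on the left (so that $\mc[S]_Q=\MC$ and $\mc[S]_R=\MC[R]$), that proposition yields $-\tensor_Q M\dashv \MC[R](M,-)$, whence any $F\cong -\tensor_Q M$ has a right adjoint.

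The heart of the matter is $(b)\Rightarrow(c)$. First I would put $M:=F(Q)$, where $Q$ is the free right $Q$-module on one generator; thus $M$ is a priori a right $R$-module. A left $Q$-action is defined by $q\cdot m:=F(\ell_q)(m)$, with $\ell_q\colon x\in Q\mapsto qx\in Q$ the (right $Q$-linear) left translation. Since $\ell_{qq'}=\ell_q\circ\ell_{q'}$ and $\ell_1=\id_Q$, functoriality makes this an associative unital action, and because every $F(\ell_q)$ is $R$-linear it commutes with the right $R$-action, so $M\in\mc_R$. The \textbf{main obstacle} is the following enrichment lemma, needed already to see that $q\mapsto q\cdot m$ preserves joins: \emph{a coproduct-preserving functor $F\colon\MC\to\MC[R]$ satisfies $F\left(\bigvee_i f_i\right)=\bigvee_i F(f_i)$ for every family of parallel morphisms $f_i\colon X\to Y$.} To prove it I would use that in module categories over quantales coproducts coincide with products, so that they are biproducts, with injections $\iota_i$ and projections $\pi_i$ satisfying $\pi_j\iota_i=\delta_{ij}\id$ and $\bigvee_i\iota_i\pi_i=\id$; consequently $\bigvee_i f_i=\nabla\circ\left(\bigoplus_i f_i\right)\circ\Delta$ for the diagonal $\Delta=\bigvee_i\iota_i$ and the codiagonal $\nabla$. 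A functor preserving coproducts preserves the zero object and zero morphisms, and by the coproduct universal property it must also preserve $\pi_i$, $\Delta$ and $\nabla$; applying $F$ to the displayed factorization then gives the claim.

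With the lemma in hand I would define, for each right $Q$-module $X$, a natural transformation $\tau_X\colon X\tensor_Q M\to F(X)$ by $x\tensor m\mapsto F(\rho_x)(m)$, where $\rho_x\colon q\in Q\mapsto xq\in X$. The identities $\rho_{xq}=\rho_x\circ\ell_q$, $\rho_{\bigvee x_i}=\bigvee\rho_{x_i}$ and $f\circ\rho_x=\rho_{f(x)}$ show, using the lemma, that $\tau_X$ is a well-defined $R$-module morphism and that $\tau$ is natural in $X$. It remains to check that $\tau_X$ is an isomorphism. For $X=Q$ one has $Q\tensor_Q M\cong M$, and $\tau_Q$ corresponds to $\id_M$ since $\rho_q=\ell_q$; as both $-\tensor_Q M$ (by Proposition \ref{natiso} together with $(a)\Rightarrow(b)$) and $F$ preserve coproducts, $\tau_{Q^I}$ is an isomorphism for every free module. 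Finally, every $X$ admits a presentation $Q^J\rightrightarrows Q^I\to X$ exhibiting $X$ as a coequalizer; applying the two right continuous, coproduct-preserving functors yields coequalizer diagrams linked by $\tau$ and isomorphic on the two free terms, so $\tau_X$ is an isomorphism by uniqueness of coequalizers. Uniqueness of $M$ then follows by evaluating any isomorphism $-\tensor_Q M\cong -\tensor_Q M'$ at $Q$, which gives a bimodule isomorphism $M\cong M'$ because the left $Q$-action on $F(Q)$ is forced by $F$.
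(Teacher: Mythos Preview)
Your proof is correct and follows essentially the same strategy as the paper: the same cycle $(a)\Rightarrow(b)\Rightarrow(c)\Rightarrow(a)$, the same choice $M=F(Q)$ with bimodule structure coming from $F$ applied to left translations, and the same reduction to free presentations $Q^J\rightrightarrows Q^I\twoheadrightarrow X$ to obtain the natural isomorphism. Your explicit enrichment lemma (that a coproduct-preserving $F$ satisfies $F(\bigvee_i f_i)=\bigvee_i F(f_i)$, proved via the biproduct description of joins) actually supplies a justification the paper omits when it simply asserts that $F$ induces a \emph{quantale} homomorphism $Q\cong\MC(Q,Q)\to\MC[R](F(Q),F(Q))$.
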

\begin{proof}
The implication $(a) \Rightarrow (b)$ follows from \cite[V.5 Theorem 1]{maclane}, and $(c) \Rightarrow (a)$ follows from Proposition \ref{natiso}.

For $(b) \Rightarrow (c)$, let $P := F(Q) \in \MC[R]{}$, then the functor $F$ induces a quantale homomorphism $Q \cong \MC{}(Q,Q) \rightarrow \MC[R]{}(F(Q),F(Q))=\MC[R]{}(P,P)$, which turns $P$ a $Q$-$R$-bimodule, i.e. $P \in \mc{}_R$. Let $M \in \mc{}$ be an arbitrary module  and a surjection $\gamma: Q^I \twoheadrightarrow M$ for some free module $Q^I \in \MC{}$. 
Since any surjective homomorphism in $\MC$ is a coequalizer of some pair of homomorphisms, let $\alpha,\beta:A \rightarrow Q^I$ be such that $\gamma=\operatorname{coeq}(\alpha,\beta)$, and therefore, for some surjection $\theta:Q^J \twoheadrightarrow A$ there is a right exact diagram
$$\begin{tikzcd}
{Q^J}  \arrow[r, "\alpha\theta", shift right=-1] \arrow[r, "\beta\theta"', shift right=1] & {Q^I} \stackrel{\gamma}{\twoheadrightarrow} M.
\end{tikzcd}$$
Applying the functors $-\tensor_Q P$ and $F$ to this diagram, we obtain the following commutative diagram in $\MC[R]{}$
$$\begin{tikzcd}
{P^J}\arrow[transform canvas={xshift=0.3ex},-]{d} \arrow[transform canvas={xshift=-0.4ex},-]{d} \arrow[r, shift right=-1] \arrow[r, shift right=1] & {P^I} \arrow[transform canvas={xshift=0.3ex},-]{d} \arrow[transform canvas={xshift=-0.4ex},-]{d}\arrow[r, shift right=0] &  M\tensor_Q P \\
{(F(Q))^J} \arrow[r, shift right=-1] \arrow[r, shift right=1]  & (F(Q))^I \arrow[r, shift right=0] &  {F(M)}\\
\end{tikzcd},$$
in which both rows are right exact diagrams. Then there exists an isomorphism between $M\tensor_Q P$ and $F(M)$ which completes the diagram above, and it can easily be verified to be a natural one in $M \in \mc$. Hence, $F \cong -\tensor_Q P$.
\end{proof}

\begin{lemma}
An epimorphism $\theta$ in $\MC$ is a surjection iff $\theta = \operatorname{coeq}(\operatorname{kerp}(\theta))$.
\end{lemma}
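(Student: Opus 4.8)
The plan is to prove both implications by exploiting the explicit descriptions of kernel pairs and coequalizers in $\MC$ supplied by Proposition~\ref{coecocomp}. Since $\MC$ is complete, the kernel pair $(\alpha,\beta)=\operatorname{kerp}(\theta)$ of $\theta\colon B\to C$ is realized concretely as the pullback of $\theta$ along itself, that is, the submodule $A=\{(b,b')\in B\times B\mid \theta(b)=\theta(b')\}$ of $B\times B$, with $\alpha,\beta$ the two corestricted projections. In particular the underlying relation carried by $A$ is exactly the kernel congruence $\ker\theta$. Dually, the proof of Proposition~\ref{coecocomp} shows that every coequalizer in $\MC$ is, up to isomorphism, a canonical quotient projection $B\to B/\vartheta$ and is therefore a surjection. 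These two concrete pictures are all that the argument needs.

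For the implication from right to left I would argue directly. If $\theta=\operatorname{coeq}(\operatorname{kerp}(\theta))$, then $\theta$ is a coequalizer, and by the construction just recalled every coequalizer in $\MC$ is a canonical projection, hence surjective; composing with the (codomain) isomorphism that identifies the canonical quotient with $C$ preserves surjectivity, so $\theta$ is a surjection. Note that this half does not even invoke the hypothesis that $\theta$ is an epimorphism.

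For the converse, assume $\theta$ is a surjection. First, $\theta\alpha=\theta\beta$ because the defining square of the pullback commutes, so $\theta$ coequalizes $(\alpha,\beta)$. It remains to check the universal property: given a homomorphism $\eta\colon B\to D$ with $\eta\alpha=\eta\beta$, I would define $\delta\colon C\to D$ by choosing, for each $c\in C$, a preimage $b$ with $\theta(b)=c$ and setting $\delta(c)=\eta(b)$. Well-definedness is the decisive point: if $\theta(b)=\theta(b')$ then $(b,b')\in A$, whence $\eta(b)=\eta\alpha(b,b')=\eta\beta(b,b')=\eta(b')$, and this is precisely where the identification of $A$ with $\ker\theta$ enters. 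That $\delta$ preserves arbitrary joins and the scalar action follows by transporting the corresponding identities for $\eta$ through chosen preimages and using that $\theta$ itself preserves joins and the action; uniqueness of $\delta$ is immediate since $\theta$, being a surjection, is an epimorphism. Hence $\theta=\operatorname{coeq}(\alpha,\beta)=\operatorname{coeq}(\operatorname{kerp}(\theta))$.

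The step I expect to require the most care is the passage between the categorical kernel pair and the algebraic kernel congruence: one must verify that the pullback object $A$ not only has underlying relation $\ker\theta$, but that its projections $\alpha,\beta$ are exactly the maps realizing that relation, so that the congruence $\langle(\alpha(a),\beta(a)):a\in A\rangle$ appearing in the coequalizer construction of Proposition~\ref{coecocomp} collapses to $\ker\theta$ itself, which is already a congruence because $\theta$ is a homomorphism. Once this bookkeeping is settled, the checks that the induced map $\delta$ is a genuine $Q$-module homomorphism are routine and rest entirely on the surjectivity of $\theta$.
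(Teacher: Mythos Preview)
Your argument is correct. Both directions are handled soundly: the backward implication via the concrete description of coequalizers as quotient projections, and the forward implication by an explicit construction of the factoring map $\delta$ using the identification of the pullback $A$ with the kernel congruence $\ker\theta$.

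The paper takes a different, much terser route. Rather than verifying the coequalizer universal property by hand, it observes that (i) every coequalizer in $\MC$ is a surjection, (ii) every surjection in $\MC$ is a coequalizer of \emph{some} pair of morphisms (both facts visible from the proof of Proposition~\ref{coecocomp}), and then invokes the general categorical fact \cite[Proposition~2.5.7]{borc} that in a category with kernel pairs a morphism is a regular epimorphism if and only if it is the coequalizer of its own kernel pair. Your approach unpacks exactly what that citation encapsulates: the passage from ``coequalizer of some pair'' to ``coequalizer of its kernel pair'' is precisely your direct verification using the concrete pullback. The trade-off is that the paper's proof is a one-liner but relies on an external reference, whereas yours is self-contained and makes the mechanism transparent, at the cost of a bit more writing and the routine bookkeeping you flag at the end.
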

\begin{proof}
This follows from the observations that in $\MC{}$ any equalizer is a surjection and any surjection is coequalizer of some pair of homomorphisms, and from \cite[Proposition 2.5.7]{borc}.
\end{proof}

\begin{definition}
Two quantales $Q$ and $R$ are said to be \emph{Morita equivalent} (in symbols: $Q\approx R$) iff there is an equivalence of the category of (left) modules over $Q$, $\mc$, and the category of (left) modules over $R$, $\mc[R]$. It can be shown that the left module categories $\mc$ and $\mc[R]$ are equivalent if and only if the right module categories $\MC$ and $\MC[R]$ are equivalent.
\end{definition}


\begin{proposition}
Let \begin{tikzcd}
{F:\MC}  \arrow[r, shift left=1] & {\MC[R]:G} \arrow[l, shift right=-1]  \end{tikzcd} be a Morita equivalence between $Q$ and $R$, i.e. $Q\approx R$. The following hold:
\begin{enumerate}[(i)]
\item If $\theta$ a surjection in $\MC$, then $F(\theta)$ is a surjection in $\MC[R]$
\item If $M$ is a generator for $\MC[Q]$, then $F(M)$ is a generator for $\MC[R]$.
\item If $P \in \MC[Q]$ is projective, then $F(P) \in \MC[R]$ is projective too.
\end{enumerate} 
\end{proposition}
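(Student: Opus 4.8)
The plan is to use that a Morita equivalence is, in particular, a categorical equivalence $F:\MC\to\MC[R]$ with quasi-inverse $G:\MC[R]\to\MC$, so that both functors preserve all small limits and colimits (these exist by Proposition \ref{coecocomp}), are full, faithful and essentially surjective, and — like any functor — preserve retracts (split idempotents). I will also use repeatedly that in $\MC$ and $\MC[R]$ arbitrary products and coproducts coincide, so a power $X^K$ is at once the product and the coproduct of $K$ copies of $X$; hence $F(X^K)\cong F(X)^K$ and a $K$-fold power of a surjection is again a surjection. For (i), recall from the preceding Lemma that, among epimorphisms, the surjections in $\MC$ are exactly the coequalizers of their own kernel pairs. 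Since an equivalence preserves epimorphisms, $F(\theta)$ is epi; writing $\theta=\operatorname{coeq}(\operatorname{kerp}(\theta))$ and using that $F$ preserves the kernel pair (a pullback, hence a limit) and the coequalizer (a colimit), I obtain $F(\theta)=\operatorname{coeq}(\operatorname{kerp}(F(\theta)))$, so the same Lemma in $\MC[R]$ yields that $F(\theta)$ is a surjection. By symmetry, $G$ also preserves surjections, a fact I use below.

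For (ii) the subtlety is that $F$ need not send the free module $Q$ to the free module $R$, so the witness ``$Q$ is a retract of $M^I$'' cannot be transported verbatim. First I would record the standard consequence of the definition: since $Q$ is a retract of some $M^I$, there is a surjection $M^I\twoheadrightarrow Q$, and composing the $K$-fold power of this surjection with a free cover $Q^K\twoheadrightarrow N$ exhibits \emph{every} $N\in\MC$ as a surjective image of a power of $M$. Now $F$ is essentially surjective, so I write $R\cong F(X)$ for some $X\in\MC$, pick a surjection $\sigma:M^{K}\twoheadrightarrow X$, and apply $F$: using (i) together with $F(M^{K})\cong F(M)^{K}$, this yields a surjection $F(M)^{K}\twoheadrightarrow R$. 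Finally, $R$ is free, hence projective by Proposition \ref{projchar}, so this surjection splits; thus $R$ is a retract of $F(M)^{K}$, which is exactly the statement that $F(M)$ is a generator for $\MC[R]$.

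For (iii) I would use the lifting characterization of projectivity (equivalent, via free covers, to the retract-of-free description of Proposition \ref{projchar}): $P$ is projective iff every morphism out of $P$ into the target of a surjection lifts through that surjection. Upgrading the equivalence to an adjoint equivalence with unit $\eta:\id_{\MC}\cong GF$ and counit $\epsilon:FG\cong\id_{\MC[R]}$, I take a surjection $\pi:A\twoheadrightarrow B$ in $\MC[R]$ and a morphism $\phi:F(P)\to B$. Applying $G$ gives a surjection $G(\pi)$ (by (i) for $G$) and a morphism $G(\phi)\circ\eta_P:P\to G(A')$ into $G(B)$, which projectivity of $P$ lifts to some $\psi:P\to G(A)$ with $G(\pi)\circ\psi=G(\phi)\circ\eta_P$. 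The claimed lift of $\phi$ is then $\epsilon_A\circ F(\psi):F(P)\to A$, and the identity $\pi\circ\epsilon_A\circ F(\psi)=\phi$ follows from the naturality of $\epsilon$ together with the triangle identity $\epsilon_{F(P)}\circ F(\eta_P)=\id_{F(P)}$.

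I expect the main obstacle to be part (ii): because $F(Q)\neq R$ in general, one cannot simply push the retract witnessing ``generator'' forward, and the detour through essential surjectivity, a free cover in $\MC$, and the splitting furnished by projectivity of the free module $R$ is the only genuinely non-formal ingredient. Parts (i) and (iii), by contrast, are essentially formal once surjections are identified with coequalizers of their kernel pairs and the equivalence is taken to be adjoint.
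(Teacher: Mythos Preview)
Your proof is correct. Part (i) coincides with the paper's argument: both use that an equivalence is simultaneously a left and a right adjoint, hence preserves the kernel pair and the coequalizer, and then invoke the preceding Lemma characterising surjections as coequalizers of their kernel pairs.

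For (ii) and (iii) the paper simply declares that being a generator and being projective are ``categorical properties, hence they are preserved by equivalences,'' without further argument. For projectivity this is harmless, since the lifting formulation you use is visibly expressed in terms of morphisms and surjections, and you have already shown the latter are preserved; your adjoint-equivalence computation just spells this out. For (ii), however, your longer route actually fills a gap that the paper's one-liner hides: the paper's definition of ``generator'' is that the specific free module $Q$ is a retract of some power $M^I$, and an equivalence need not send $Q$ to $R$, so the claim is not literally categorical on its face. Your argument --- every object is a surjective image of a power of a generator; choose $X$ with $F(X)\cong R$; transport the surjection $M^K\twoheadrightarrow X$ via (i) and $F(M^K)\cong F(M)^K$; split using projectivity of the free module $R$ --- is exactly what is needed to justify the paper's assertion. (One small typo: in your treatment of (iii), ``$P\to G(A')$'' should read ``$P\to G(B)$''.)
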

\begin{proof}
By \cite[IV.4, page 93]{maclane}, $F \dashv G$ and $G \dashv F$, i.e. the functor $F$ is both left and right adjoint to $G$. Then, by of \cite[V.5, Theorem 1]{maclane} and its dual, $F$ preserves limits and colimits, and hence the (i) follows from the previous lemma.

Items (ii) and (iii) are obvious because being a generator and projectivity are categorical properties, hence they are preserved by equivalences.
\end{proof}

\begin{corollary}\label{pres}
A Morita equivalence between two quantales preserves projective generators.
\end{corollary}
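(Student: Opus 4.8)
The plan is to observe that the corollary is an immediate consequence of the preceding proposition once we unwind the definition of progenerator. Recall that, by definition, a progenerator for a module category is precisely a module that is simultaneously projective and a generator. Thus ``preserving progenerators'' amounts to preserving projectivity and the generator property at the same time, and each of these has already been established separately.

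Concretely, I would argue as follows. Let $F \colon \MC \rightarrow \MC[R]$ be a Morita equivalence with quasi-inverse $G$, and let $M$ be a progenerator for $\MC$. Then $M$ is in particular a generator, so part~(ii) of the previous proposition gives that $F(M)$ is a generator for $\MC[R]$; and $M$ is in particular projective, so part~(iii) of the same proposition gives that $F(M)$ is projective in $\MC[R]$. Combining these two conclusions, $F(M)$ is a projective generator, i.e.\ a progenerator for $\MC[R]$. Since a Morita equivalence is a symmetric relation and, as recorded in the preceding proposition, $F \dashv G$ and $G \dashv F$, the very same reasoning applied to $G$ shows that $G$ carries progenerators of $\MC[R]$ back to progenerators of $\MC$; hence both functors preserve progenerators.

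There is no genuine obstacle here, since all the real work was done in the previous proposition (which in turn rests on the fact that $F$, being both a left and a right adjoint, preserves all limits and colimits, and that being a generator and being projective are categorical properties). The only point deserving a word of care is that the preceding proposition is phrased for the right-module categories $\MC$ and $\MC[R]$, whereas the definition of progenerator and of Morita equivalence are stated for the left-module categories $\mc$ and $\mc[R]$. This causes no loss of generality, because the definition of Morita equivalence explicitly notes that $\mc$ and $\mc[R]$ are equivalent if and only if $\MC$ and $\MC[R]$ are equivalent, so the argument transfers verbatim to either side.
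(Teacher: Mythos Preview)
Your proposal is correct and matches the paper's approach exactly: the paper gives no separate proof for this corollary, treating it as an immediate consequence of parts (ii) and (iii) of the preceding proposition, which is precisely what you do. Your extra remark about the left/right module formulation is a fair clarification but not something the paper itself addresses.
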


\begin{lemma}\label{isoquan}
Let $R$ be a $R$-$R$-bimodule and $M$ a $Q$-$R$-bimodule such that $R$ and $\End(\mm{})$ are isomorphic as $R$-$R$-bimodules. Then $R$ and $\End(\mm{})$ are isomorphic as quantales.
\end{lemma}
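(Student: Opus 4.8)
The plan is to produce the candidate isomorphism directly as the canonical \emph{regular representation} of $R$ on $M$, and then to use the hypothesised bimodule isomorphism only to prove that this canonical map is bijective. First I would make the $R$-$R$-bimodule structure on $E := \End(\mm{})=\End_Q(M)$ explicit. Since $M$ is a $Q$-$R$-bimodule, for each $r \in R$ the right translation $\eta(r) : m \mapsto mr$ is left $Q$-linear and join-preserving (by the bimodule associativity $(q\cdot_Q m)\cdot_R r = q\cdot_Q(m\cdot_R r)$ and the distributivity of the $R$-action), so $\eta(r) \in E$. With the composition convention $h \cdot g = g \circ h$ used for the product of $E$, a routine check gives $\eta(r) \cdot \eta(s) = \eta(rs)$, $\eta(1_R) = \id_M = 1_E$, and $\eta\bigl(\bigvee_i r_i\bigr) = \bigvee_i \eta(r_i)$; hence $\eta : R \to E$ is a quantale homomorphism, and the $R$-$R$-bimodule structure on $E$ is $r \triangleright \phi := \eta(r) \cdot \phi$ and $\phi \triangleleft s := \phi \cdot \eta(s)$.

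Next I would analyse the given $R$-$R$-bimodule isomorphism $\Phi : R \to E$. Setting $u := \Phi(1_R)$ and using that $\Phi$ is at once left and right $R$-linear, I obtain, for every $r \in R$,
$$\Phi(r) = \Phi(r \cdot 1_R) = \eta(r) \cdot u \qquad\text{and}\qquad \Phi(r) = \Phi(1_R \cdot r) = u \cdot \eta(r).$$
The crux of the argument — and the step I expect to be the main obstacle — is to show that $u$ is invertible in the monoid $(E, \cdot, 1_E)$. Because $\Phi$ is surjective there is some $r_0 \in R$ with $\Phi(r_0) = 1_E$, and the two displayed expressions then yield $\eta(r_0) \cdot u = 1_E = u \cdot \eta(r_0)$, so $\eta(r_0)$ is a two-sided inverse of $u$. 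This is delicate precisely because a quantale need carry no multiplicative units at all beyond $1_E$; what makes it work is that $1_E = \id_M$ happens to lie in the image of the bijection $\Phi$, which forces $u$ to be a unit with $u^{-1} = \eta(r_0)$.

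Finally I would read off the isomorphism. From $\Phi(r) = \eta(r) \cdot u$ and the invertibility of $u$ I get $\eta(r) = \Phi(r) \cdot u^{-1}$, that is, $\eta = R_{u^{-1}} \circ \Phi$, where $R_{u^{-1}} : x \mapsto x \cdot u^{-1}$ is right multiplication by $u^{-1}$. Both factors are sup-lattice isomorphisms: $\Phi$ because a bimodule isomorphism is in particular a sup-lattice isomorphism, and $R_{u^{-1}}$ because multiplication in a quantale distributes over arbitrary joins and its two-sided inverse is $R_{u}$. Hence $\eta$ is a bijective sup-lattice morphism whose inverse preserves joins. Since $\eta$ is moreover a quantale homomorphism and a bijective monoid homomorphism automatically has a monoid-homomorphism inverse, $\eta^{-1}$ is also a quantale homomorphism. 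Therefore $\eta : R \to \End(\mm{})$ is the desired quantale isomorphism, and the routine verifications left to fill in are only the distributivity and associativity identities asserted above.
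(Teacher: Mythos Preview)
Your proof is correct and follows the same approach as the paper: both define the regular representation $r \mapsto (m \mapsto mr)$ as the candidate quantale homomorphism and then use the given bimodule isomorphism (your $\Phi$, the paper's $\alpha$) together with the element $\Phi(1_R)$ to establish bijectivity. Your treatment is in fact more complete, since the paper asserts surjectivity of this map as ``obvious'' without exhibiting the invertibility of $u = \Phi(1_R)$ that you correctly isolate as the crux of the argument.
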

\begin{proof}
Let $\alpha: R \rightarrow \End(\mm[P]{})$ be an isomorphism between the $R$-$R$-bimodules $R$ and $\End(\mm[P]{})$. 

Then there exist $r_1 \in R$ and $f_1 \in \End(\mm[P]{})$ such that $\alpha(r_1) = \id_P$ and $\alpha(1_R) = f_1$, and they are uniquely determined. So, for all $r \in R$, we have
\begin{equation}\label{alphar}
\alpha(r) = \alpha(1_R\cdot r) = \alpha(1_R) \cdot r = f_1\cdot r = f_1 \circ \hat{r} = \hat{r} f_1,
\end{equation}
where $\hat{r}: x \mapsto x\cdot r$ is an element of $\End(\mm[P]{})$.
Let $h: r \in R \mapsto \hat{r} \in \End(\mm[P]{})$. $h$ is obviously a surjective quantale homomorphism. In order to prove injectivity, let $r, s \in R$ be such that $h(r) = h(s)$. Then, by (\ref{alphar}), we have $\alpha(r) = \hat{r} f_1 = h(r) f_1 = h(s) f_1 = \hat{s} f_1 = \alpha(s)$, but $\alpha$ is bijective, and therefore such equalities imply $r = s$.
\end{proof}

\begin{proposition}
Let $\alpha' : P \tensor_R G \rightarrow Q$ and $\beta' : G \tensor_Q P \rightarrow R$ be a $Q$-$Q$-isomorphism and an $R$-$R$-isomorphism respectively, for bimodules $P \in \mc_R$ and $G \in \mc[R]_Q$ over quantales $Q$ and $R$. Then the left module $\mm[P]{}$ is a progenerator for the category of left modules $\mc$, and there are quantale isomorphisms $R \cong \End(\mm[P]{})$ and $Q \cong \End(P_R)$, as well as an $R$-$Q$-isomorphism $G \cong P^* = \mc(\mm[P]{},\mm[Q]{})$ between the $R$-$Q$-bimodules $G$ and $P^*$.
\end{proposition}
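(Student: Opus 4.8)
The plan is to read $(\alpha',\beta')$ as a \emph{strict Morita context} and to show that the tensor functors it determines are quasi-inverse equivalences, after which the statement follows from the machinery of Section~\ref{progenerator} together with Corollary~\ref{pres} and Lemma~\ref{isoquan}. I would set $F = P \tensor_R - : \mc[R] \to \mc$ and $H = G \tensor_Q - : \mc \to \mc[R]$, which are well defined because $P$ is a $Q$-$R$-bimodule and $G$ an $R$-$Q$-bimodule. Using the associativity isomorphism of Proposition~\ref{natisom}, the unit isomorphisms $Q \tensor_Q - \cong \id$ and $R \tensor_R - \cong \id$, and the given isomorphisms, I would assemble natural isomorphisms
$$HF = G \tensor_Q (P \tensor_R -) \cong (G \tensor_Q P) \tensor_R - \xrightarrow{\beta' \tensor \id} R \tensor_R - \cong \id_{\mc[R]}$$
and, symmetrically,
$$FH = P \tensor_R (G \tensor_Q -) \cong (P \tensor_R G) \tensor_Q - \xrightarrow{\alpha' \tensor \id} Q \tensor_Q - \cong \id_{\mc}.$$
The crucial observation is that each composite uses only one of the two given isomorphisms, so that $F$ and $H$ are quasi-inverse equivalences with \emph{no} compatibility relation between $\alpha'$ and $\beta'$ being assumed or required; in particular $Q \approx R$.

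Next I would observe that $\mm[P]{} = P \tensor_R R \cong F(\mm[R][R]{})$. Since $\mm[R][R]{}$ is free of rank one it is a progenerator of $\mc[R]$ (projective by Proposition~\ref{projchar}, and a generator since $\id$ lies in its dual, whence $\operatorname{tr}(\mm[R][R]{}) = R$), so Corollary~\ref{pres} forces $\mm[P]{}$ to be a progenerator of $\mc$. This settles the first assertion and, through Theorem~\ref{progiso}, guarantees that the canonical maps $\alpha : P \tensor_E P^* \to Q$ and $\beta : P^* \tensor_Q P \to E$ are bimodule isomorphisms, where $E = \End(\mm[P]{})$ and $P^* = \mc(P,Q)$.

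To identify the remaining objects I would first establish $G \cong P^*$. By Proposition~\ref{natiso} the functor $F = P \tensor_R -$ admits $\mc(P,-)$ as a right adjoint; since an equivalence can be promoted to an adjoint equivalence, its quasi-inverse $H = G \tensor_Q -$ is in particular a right adjoint of $F$, and uniqueness of right adjoints yields a natural isomorphism $G \tensor_Q - \cong \mc(P,-)$. Evaluating at $\mm[Q][Q]{Q}$ gives the $R$-$Q$-bimodule isomorphism $G \cong G \tensor_Q Q \cong \mc(P,Q) = P^*$. Combining this with $\beta'$ and with the isomorphism $\beta$ of the previous paragraph produces a chain of $R$-$R$-bimodule isomorphisms $R \cong G \tensor_Q P \cong P^* \tensor_Q P \cong E$, where $E$ is regarded as an $R$-$R$-bimodule through the quantale morphism $r \mapsto (p \mapsto pr)$; Lemma~\ref{isoquan} then upgrades this to a quantale isomorphism $R \cong \End(\mm[P]{})$. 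Finally, Proposition~\ref{prog}(iii) gives $Q \cong \End(P_E)$, and as the isomorphism $E \cong R$ sends the right $E$-action on $P$ exactly to its right $R$-action, this reads $Q \cong \End(P_R)$.

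The step I expect to be the main obstacle is not the construction of the equivalence but the bookkeeping of module structures and composition conventions needed to apply Lemma~\ref{isoquan} and Proposition~\ref{prog}(iii) verbatim: one must check that the $R$-$R$-bimodule structure carried by $G \tensor_Q P$ through $\beta'$ coincides with the one carried by $P^* \tensor_Q P$ through $\beta$ once $E$ is cut down to an $R$-$R$-bimodule along $r \mapsto (p \mapsto pr)$, being careful that the product of $\End(\mm[P]{})$ is reversed composition while that of $\End(P_E)$ is ordinary composition. These checks are routine but must be performed attentively.
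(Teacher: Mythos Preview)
Your proof is correct and takes a genuinely different route from the paper's. The paper does not first assemble the equivalence $F,H$; instead it extracts from $\alpha'$, via the hom--tensor adjunction of Proposition~\ref{natiso}, an explicit $R$-$Q$-map $\gamma:G\to P^*$, $p(\gamma(g))=\alpha'(p\otimes g)$, and then chases two small commutative squares. The first square compares $\alpha'$ with the canonical $\alpha$ through $1_P\otimes\gamma$ and shows $\alpha$ is onto, hence $P$ is a generator and $\alpha$ an isomorphism by Proposition~\ref{alphaiso}; tensoring that square with $G\otimes_Q-$ and invoking $\beta'$ forces $\gamma$ to be an isomorphism. A second square, built from associativity and $\alpha$, shows $\beta$ is an isomorphism, whence $P$ is a progenerator by Theorem~\ref{progiso}. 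Only after this element-level work does the paper apply Lemma~\ref{isoquan} and Proposition~\ref{prog}, exactly as you do.

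What your approach buys is conceptual economy: once the equivalence is in place, ``progenerator'' and ``$G\cong P^*$'' fall out of Corollary~\ref{pres} and uniqueness of right adjoints with no diagram chasing. What the paper's approach buys is an explicit formula for the isomorphism $G\cong P^*$ and self-containment: it never needs to know that the paper's retract-based notion of generator is preserved by equivalences, nor that an equivalence can be promoted to an adjoint equivalence. Your closing caveat is on target: the point to verify is that the $R$-$R$-structure on $P^*\otimes_Q P$ coming from $G$ coincides with the restriction of the $E$-$E$-structure along $r\mapsto\hat r$, so that Lemma~\ref{isoquan} applies and pins down the quantale isomorphism as $r\mapsto\hat r$; this is precisely what lets you transport $\End(P_E)$ to $\End(P_R)$ in the final step.
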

\begin{proof}
Applying 
$$\mc[Q]_Q(P \tensor_R G, Q) \cong \mc[R]_Q(G,P^*)$$ from proposition \ref{natiso}, we have the $R$-$Q$-module homomorphism $\gamma : {}_RG_Q \to {}_RP^*_Q$, 
defined by $p(\gamma(g)) = \alpha'(p \otimes g).$ 

Then, applying Proposition \ref{alphaiso} to the commutative diagram

$$
\begin{tikzcd}[column sep=large]
P \otimes_R G \arrow[d, "\alpha'"'] \arrow[r, "1_P \otimes \gamma"] & P \otimes_R P^* \arrow[d, "\alpha"] \\
Q \arrow[r, phantom, "\scriptstyle ="] & Q
\end{tikzcd}
$$

we obtain that $\mm[P]{}$ is a generator for $\mc$ and $\alpha$ is an isomorphism. Furthermore, applying the functor $G \otimes_Q -$ to this diagram and taking into consideration Proposition \ref{natisom} and the fact that $\beta' : G \otimes_Q P \to R$ is an isomorphism, we get that $\gamma$ is an isomorphism too. Applying again the functor $G \otimes_Q -$ to the commutative diagram

$$
\begin{tikzcd}
(P \otimes_R P^*) \otimes_Q P \arrow[r, "1_P \otimes \beta"] \arrow[d, "\alpha \otimes 1_P"'] & P \otimes_R R \arrow[d, "\cong"] \\
Q \otimes_Q P \arrow[r, "\scalebox{0.9}{$\cong$}"', shorten <=14pt, shorten >=14pt] & P
\end{tikzcd}
$$
and using Proposition \ref{natisom} and the fact that $\beta' : G \otimes_Q P \to R$ is an isomorphism, we get that 
also $\beta : P^* \otimes_Q P \to R$ is an isomorphism, and therefore, by Theorem \ref{progiso}, $\mm[P]{}$ is a progenerator for $\mc$.
Now, applying the functor $P \otimes_R -$ to the $R$-$R$-module $\End(G \tensor_Q P)$, and using Proposition \ref{natisom} and the isomorphism $
\beta' : G \otimes_Q P \to R$, as well as applying the functor $P \otimes_R (G \otimes_Q -)$ to the $R$-$R$-module $\End(\mm[P]{})$ and again using Proposition \ref{natisom} and the isomorphism $\alpha' : P \otimes_R G \to Q$, one readily sees that the $R$-$R$-modules $(G \otimes_Q P)$ and $\End(\mm[P]{})$ are isomorphic.
From the latter, applying Lemma \ref{isoquan} and the $R$-$R$-isomorphism $\beta': G \otimes_Q P \to R$, we have that $R \cong \End(\mm[P]{})$ as quantales, and therefore, identifying the quantales $R$ and $\End(\mm[P]{})$, and using Proposition \ref{prog}, we conclude that
$ Q \cong \End(P_R)$ as quantales, too.
\end{proof}

\begin{corollary}
Let $Q, R$, and $S$ be quantales, $\mm[P]{}$ be a progenerator for $\mc{}$ such that the quantales $R$ and $\operatorname{End}(\mm[P]{})$ are isomorphic, and ${}_{R}G$ a progenerator of $\mc[R]{}$ such that the quantales $S$ and $\operatorname{End}({}_{R}G)$ are isomorphic. Then there exists a left progenerator $\mm[(P\tensor_R G)]{}$ for $\mc{}$ such that the quantales $S$ and $\operatorname{End}(\mm[(P\tensor_R G)]{})$ are isomorphic, and  the $S$-$Q$-modules $(P\tensor_R G)^*$ and $(G^*\tensor_R P^*)$ are isomorphic.
\end{corollary}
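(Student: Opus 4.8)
The plan is to recognise this statement as the composition law for Morita data and to reduce it, in one stroke, to the preceding proposition. Since $\mm[P]{}$ is a progenerator for $\mc$ with $R\cong\End(\mm[P]{})$, Proposition~\ref{prog} and Theorem~\ref{progiso} equip $P$ with the structure of a $Q$-$R$-bimodule, make $P^*$ an $R$-$Q$-bimodule, and provide bimodule isomorphisms $\alpha_P\colon P\tensor_R P^*\to Q$ (a $Q$-$Q$-isomorphism) and $\beta_P\colon P^*\tensor_Q P\to R$ (an $R$-$R$-isomorphism). Applying the same results to the progenerator ${}_R G$ of $\mc[R]$ with $S\cong\End({}_R G)$ makes $G$ an $R$-$S$-bimodule, $G^*$ an $S$-$R$-bimodule, and yields isomorphisms $\alpha_G\colon G\tensor_S G^*\to R$ and $\beta_G\colon G^*\tensor_R G\to S$. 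Consequently $P\tensor_R G$ is a $Q$-$S$-bimodule, so in particular a left $Q$-module, and the natural candidate for its dual is the $S$-$Q$-bimodule $H:=G^*\tensor_R P^*$.

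The heart of the proof is to exhibit the two bimodule isomorphisms required to invoke the preceding proposition with $(P\tensor_R G,\,H)$ in the roles of $(P,G)$. Using the associativity isomorphism of Proposition~\ref{natisom} repeatedly, together with $\alpha_G$, $\alpha_P$ and the left unit law $R\tensor_R P^*\cong P^*$, I would build the $Q$-$Q$-isomorphism
\[
(P\tensor_R G)\tensor_S(G^*\tensor_R P^*)\cong P\tensor_R\bigl((G\tensor_S G^*)\tensor_R P^*\bigr)\cong P\tensor_R(R\tensor_R P^*)\cong P\tensor_R P^*\cong Q.
\]
Symmetrically, using $\beta_P$, $\beta_G$ and $R\tensor_R G\cong G$, I would build the $S$-$S$-isomorphism
\[
(G^*\tensor_R P^*)\tensor_Q(P\tensor_R G)\cong G^*\tensor_R\bigl((P^*\tensor_Q P)\tensor_R G\bigr)\cong G^*\tensor_R(R\tensor_R G)\cong G^*\tensor_R G\cong S.
\]
Each arrow is a bimodule isomorphism in the appropriate category, and bimodule isomorphisms compose, so the two composites are genuine $Q$-$Q$- and $S$-$S$-isomorphisms.

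With these two isomorphisms in hand, the preceding proposition --- applied with $Q$, $S$, $P\tensor_R G$ and $G^*\tensor_R P^*$ playing the roles of $Q$, $R$, $P$ and $G$ there --- immediately delivers all three conclusions: $\mm[(P\tensor_R G)]{}$ is a progenerator for $\mc$, there is a quantale isomorphism $S\cong\End(\mm[(P\tensor_R G)]{})$, and there is an $S$-$Q$-bimodule isomorphism $G^*\tensor_R P^*\cong(P\tensor_R G)^*$, which is exactly the asserted isomorphism $(P\tensor_R G)^*\cong G^*\tensor_R P^*$. The mathematical content is thus a single associativity computation; the main obstacle is purely bookkeeping, namely keeping track of the four bimodule structures so that every associativity and unit isomorphism is applied with its subscripts correctly aligned to Proposition~\ref{natisom}, and so that the resulting composites are isomorphisms of bimodules of the right type rather than mere sup-lattice isomorphisms.
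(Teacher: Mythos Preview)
Your proposal is correct and follows essentially the same route as the paper: obtain the four bimodule isomorphisms $\alpha_P,\beta_P,\alpha_G,\beta_G$ from Theorem~\ref{progiso}, combine them via the associativity of Proposition~\ref{natisom} into the two required isomorphisms $(P\tensor_R G)\tensor_S(G^*\tensor_R P^*)\cong Q$ and $(G^*\tensor_R P^*)\tensor_Q(P\tensor_R G)\cong S$, and then invoke the preceding proposition. Your write-up is in fact more explicit than the paper's about the intermediate steps in the associativity chain.
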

\begin{proof}
By Theorem \ref{progiso}, there are the bimodule isomorphisms

$$\alpha_P: P \tensor_R P^* \rightarrow Q \hspace{7mm}\text{ and } \hspace{7mm} \beta_P: P^* \tensor_Q P \rightarrow R$$
and
$$\alpha_M: M \tensor_S M^* \rightarrow R \hspace{7mm} \text{ and } \hspace{7mm} \beta_M: M^* \tensor_R M \rightarrow S.$$

Using such isomorphisms and Proposition \ref{natisom}, we obtain the bimodule isomorphisms 

$$(P \tensor_R M) \tensor_S (M^* \tensor_R P^*) \cong Q \hspace{7mm} \text{ and } \hspace{7mm} (M^* \tensor_R P^*) \tensor_S (P \tensor_R M) \cong S.$$
Therefore, the result follows from the previous proposition.
\end{proof}

\begin{theorem}\label{moritath}
Two quantales $Q$ and $R$ are Morita equivalent if and only if 
there exists a progenerator $\mm[P]{}$ for $\mc{}$ such that the quantales $R$ and $\operatorname{End}(\mm[P]{})$ are isomorphic.
\end{theorem}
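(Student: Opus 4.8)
The plan is to prove the two implications separately, in both cases reducing to the Morita-context machinery already developed. For the implication ``$\Leftarrow$'', assume $\mm[P]{}$ is a progenerator for $\mc$ and fix a quantale isomorphism $R \cong E$, where $E := \End(\mm[P]{})$; recall that $\mm[P][Q]{E}$ is a $Q$-$E$-bimodule and that its dual $P^* = \mc(\mm[P]{},\mm[Q]{})$ is an $E$-$Q$-bimodule. By Theorem \ref{progiso} the canonical maps $\alpha : P \tensor_E P^* \to Q$ and $\beta : P^* \tensor_Q P \to E$ are bimodule isomorphisms. I would then introduce the two tensor functors
\[
F := P^* \tensor_Q - : \mc \longrightarrow \mc[E]
\qquad\text{and}\qquad
G := P \tensor_E - : \mc[E] \longrightarrow \mc,
\]
which are well defined on the stated categories precisely because of the bimodule sides of $P$ and $P^*$. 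Using the associativity of the tensor product (Proposition \ref{natisom}), the isomorphisms $\alpha$ and $\beta$, and the unit isomorphisms $Q \tensor_Q - \cong \id_{\mc}$ and $E \tensor_E - \cong \id_{\mc[E]}$ (immediate from the universal property of the tensor product), one obtains
\[
G F \cong (P \tensor_E P^*) \tensor_Q - \cong Q \tensor_Q - \cong \id_{\mc},
\qquad
F G \cong (P^* \tensor_Q P) \tensor_E - \cong E \tensor_E - \cong \id_{\mc[E]}.
\]
Hence $F$ and $G$ are mutually quasi-inverse equivalences, so $\mc \simeq \mc[E]$; since the quantale isomorphism $E \cong R$ transports structure to identify $\mc[E]$ with $\mc[R]$, this yields $Q \approx R$.

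For the implication ``$\Rightarrow$'', assume $Q \approx R$. By the remark accompanying the definition of Morita equivalence this also gives an equivalence of the right-module categories $\MC \simeq \MC[R]$; let $\Phi : \MC \to \MC[R]$ be an equivalence functor with quasi-inverse $\Psi$. Being one half of an equivalence, $\Phi$ admits $\Psi$ as a right adjoint, so Theorem \ref{adj} applies and produces a $Q$-$R$-bimodule $P \in \mc_R$ with $\Phi \cong - \tensor_Q P$; applying the same theorem to $\Psi$ yields an $R$-$Q$-bimodule $G \in \mc[R]_Q$ with $\Psi \cong - \tensor_R G$. From $\Psi\Phi \cong \id_{\MC}$ and $\Phi\Psi \cong \id_{\MC[R]}$, together with associativity (Proposition \ref{natisom}), I would extract bimodule isomorphisms $P \tensor_R G \cong Q$ as $Q$-$Q$-bimodules and $G \tensor_Q P \cong R$ as $R$-$R$-bimodules. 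This is exactly a Morita context of the shape treated in the Morita-context proposition established above (with hypotheses $\alpha' : P \tensor_R G \to Q$ and $\beta' : G \tensor_Q P \to R$), whose conclusion asserts that the left module $\mm[P]{}$ is a progenerator for $\mc$ and that $R \cong \End(\mm[P]{})$, which is the required statement.

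The routine bookkeeping in the first part (well-definedness on the correct categories and naturality of the displayed isomorphisms) is harmless, so the delicate point is the extraction step in the second part. The abstract equivalence initially delivers only a natural isomorphism $- \tensor_Q (P \tensor_R G) \cong \id_{\MC}$ of functors, and from this one must deduce that $P \tensor_R G \cong Q$ \emph{as $Q$-$Q$-bimodules}, rather than merely as functors or sup-lattices. Pinning down the full bimodule structure of the comparison isomorphisms, so that the hypotheses of the Morita-context proposition are genuinely met, is the main obstacle; I expect to handle it by evaluating the natural isomorphism at the regular bimodule $Q$ and verifying, via a Yoneda-type argument, that the resulting map respects both the left and the right actions.
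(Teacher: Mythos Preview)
Your proposal is correct. For the ``$\Leftarrow$'' direction you and the paper do essentially the same thing: build the tensor functors out of $P$ and $P^*$ and invoke Theorem \ref{progiso} together with associativity and the unit isomorphisms. The only cosmetic difference is that the paper phrases everything on the right-module categories, using $-\tensor_Q P$ and $-\tensor_R P^*$, while you work on the left.

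For the ``$\Rightarrow$'' direction the two arguments genuinely diverge. The paper does not go through the Morita-context proposition at all: it observes (Corollary \ref{pres}) that an equivalence preserves progenerators, so $P := F(Q)$ is automatically a progenerator of $\MC[R]$, and then computes directly from the adjunction
\[
\End(P_R)=\MC[R](F(Q),F(Q))\cong \MC(Q,GF(Q))\cong \MC(Q,Q)\cong Q,
\]
declaring this ``equivalent to the affirmed''. This is shorter and entirely sidesteps the extraction of bimodule isomorphisms you flag as the delicate point; on the other hand it lands on the dual statement $\End(P_R)\cong Q$ and leaves to the reader the passage to the literal form $R\cong \End(\mm[P]{})$ (via the symmetry of Morita equivalence together with Proposition \ref{prog} / Corollary \ref{corprog}). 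Your route is longer but arrives directly at the statement as written, and your Yoneda-type plan for the extraction step is exactly right: evaluate the natural isomorphism $-\tensor_Q(P\tensor_R G)\cong \id_{\MC}$ at $Q$, and use naturality with respect to the left-multiplication maps $\lambda_q:Q\to Q$ in $\MC$ to see that the component at $Q$ is also a left $Q$-module map, hence a $Q$-$Q$-bimodule isomorphism $P\tensor_R G\cong Q$; the argument for $G\tensor_Q P\cong R$ is symmetric.
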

\begin{proof}
For the left-to-right implication, suppose that $Q$ and $R$ are Morita equivalent. By Theorem \ref{adj} we have that $F\cong -\tensor_Q P$ for the $Q$-$R$-bimodule $P=F(Q) \in \MC[R]{}$, and $G\cong -\tensor_R T$ for the $R$-$Q$-bimodule $T=G(R) \in \MC{}$. First notice that by Proposition \ref{pres} the modules $P=F(Q) \in \MC[R]{}$ and $T=G(R) \in \MC{}$ are progenerators for $\MC[R]{}$ and $\MC{}$ respectively. Then, taking into consideration the natural isomorphisms, we have:
$$\begin{array}{l}
\MC[R]{}(P,P) = \MC[R]{}(F(Q),P) \cong \MC{}(Q,G(P)) \cong \\
\cong \MC{}(Q,G(F(Q))) \cong \MC{}(Q,Q) \cong Q,
\end{array}$$
i.e., $\operatorname{End}(Y_{R}) \cong Q $, that is equivalent to the affirmed.

In order to prove the right-to-left implication, let $\mm[P]{}$ be a progenerator for $\mc{}$ such that the quantales $R$ and $\operatorname{End}(\mm[P]{})$ are isomorphic, and $P \in \mc_R$,  and let $N = P^* := \mc{}(\mm[P]{}, \mm[Q]{})$ be the dual $R$-$Q$-module $P^*$ of the $Q$-$R$-module P. Considering the composition of the functors $-\tensor_Q P: \MC{} \rightarrow \MC[R]{}$ and $-\tensor_R N = -\tensor_R P^*: \MC[R]{} \rightarrow \MC{}$ and using Proposition \ref{natiso} and Theorem \ref{progiso}, we have the commutative diagram in $\MC{}$:
$$\begin{tikzcd}
X \tensor_Q P \tensor_R P^* \arrow[d, "\gamma \tensor 1_P \tensor 1_{P^*}"'] & {} \arrow[r, "1_X\tensor \alpha"] & {} & X \tensor_Q Q \arrow[d, "\gamma \tensor 1_Q"] & {} \arrow[r, "\cong"] & {} & X \arrow[d, "\gamma"'] \\
Y \tensor_Q P \tensor_R P^*                                                  & {} \arrow[r, "1_Y\tensor \alpha"] & {} & Y \tensor_Q Q                                 & {} \arrow[r, "\cong"] & {} & Y                     
\end{tikzcd}$$
for any $\gamma: X \rightarrow Y \in \MC{}$. From this diagram one can easily see that the functors $-\tensor_R P^* \circ -\tensor_Q P: \MC{} \rightarrow \MC{}$ and $\operatorname{Id}_{\MC{}}: \MC{} \rightarrow \MC{}$ are naturally isomorphic, i.e. $-\tensor_R P^* \circ -\tensor_Q P \cong \operatorname{Id}_{\MC{}}$. Similarly, by Proposition \ref{prog} and Corollary \ref{corprog}, one obtains the functor natural isomorphism $-\tensor_Q P$ $\circ$ $-\tensor_R P^* \cong \operatorname{Id}_{\MC[R]{}}$, and the equivalence of categories $-\tensor_Q P: \MC{} \rightleftarrows  \MC[R]{}: -\tensor_R P^*$. 
\end{proof}

\section*{Declarations}

\begin{itemize}
\item[] {\bf Ethical Approval}

This declaration is not applicable.

\item[] {\bf Competing interests}

Neither the authors nor third parties have financial interests related to this work.

\item[] {\bf Authors' contributions}

The authors contributed equally to this work.

\item[] {\bf Funding}

No funding received.

\item[] {\bf Availability of data and materials}

This declaration is not applicable.
\end{itemize}

\bibliographystyle{plain}

\bibliography{cirobibtex}

\end{document}